\newtheorem{proposition}{Proposition}
\newtheorem{lemme}[proposition]{Lemma}
\newtheorem{theo}[proposition]{Theorem}
\newtheorem{remarque}[proposition]{Remark}
\numberwithin{equation}{section}
\numberwithin{proposition}{section}
\def\11{{\rm 1~\hspace{-1.4ex}l} }
\def\R{\mathbb R}
\def\Z{\mathbb Z}
\def\N{\mathbb N}
\begin{document}
\title[Quasi-invariant measures]{Quasi-invariant gaussian measures for one dimensional Hamiltonian PDE's}
\author{Nikolay Tzvetkov}
\address{
Universit\'e de Cergy-Pontoise,  Cergy-Pontoise, F-95000,UMR 8088 du CNRS}
\email{nikolay.tzvetkov@u-cergy.fr}
\begin{abstract} 
We prove the quasi-invariance of gaussian measures (supported by functions of increasing Sobolev regularity) under the flow of one dimensional Hamiltonian PDE's such as the regularized long wave (BBM) equation.
\end{abstract}

\maketitle
%
\section{Introduction}
\subsection{Motivation}
Our motivation for this work is twofold.
From one hand there is an extensive literature about the transport of Gaussian measures under nonlinear transformations (see e.g. \cite{Bog,Ra, ABC0, ABC, BW}).
These works treat either general nonlinear transformations close to the identity (see e.g. \cite{Ra}) or transformations generated by vector fields, under an exponential integrability assumption (see e.g. \cite{ABC}). It was however not clarified how much these results apply in the context of Hamiltonian PDE's. 

On the other hand, there is an extensive literature about invariant gaussian type measures (absolutely continuous with respect to gaussian measures) under the flows of Hamiltonian partial differential equations (see e.g.  \cite{LRS, B,B2, BB, BB2,BB3, BT1, BT2,BTT,Deng0,Deng, NORS,OH, OH2,Richards,S,Tz_fourier,TV,TVjmpa, QV, zh}). 
 In most of the cases the support of the measure consists of low regularity functions. Two exceptions are the KdV and the Benjamin-Ono equations where one can use the high order (i.e. controlling high order Sobolev norms) conservation laws in order to get invariant gaussian type measures supported by fairly smooth functions (see \cite{zh, TV, TVjmpa}). 
 
The existence of conservation laws controlling higher Sobolev norms is an exceptional event. 
Therefore, for Hamiltonian PDE's where conservation laws of high order are not available, it is not clear how the transport of gaussian measures, supported by functions of high Sobolev regularity, behaves under the corresponding Hamiltonian flow. Our goal here is to make a progress in this direction.
Namely, we will show that in the case of regularized long wave equations, gaussians measures supported by functions of arbitrary high Sobolev regularities are quasi-invariant by the flow of the corresponding equations. We recall that a measure $\mu$ on a space $X$ is called quasi-invariant under a transformation $\Phi:X\rightarrow X$ if its image under $\Phi$ is absolutely continuous with respect to $\mu$. 
\subsection{Derivation of a BBM type model}
If one considers shallow small amplitude long water waves, one obtains that the evolution of the water surfaces $u$ satisfies (formally) the equation
\begin{equation}\label{KdV}
\partial_t u+\partial_x u+\varepsilon_1 \partial_x^3u+\varepsilon_2 \partial_x(u^2)=O(\varepsilon_1^2+\varepsilon_2^2).
\end{equation}
In \eqref{KdV} $\varepsilon_1$ represents the square of the ratio between the depth of the fluid and the typical wave length while $\varepsilon_2$ represents the ratio between the wave amplitude and the depth (see e.g. \cite{Lannes}). 
Both $\varepsilon_1$ and $\varepsilon_2$ are small parameters. 
In the regime $\varepsilon_1\approx \varepsilon_2$ one takes into account both linear and nonlinear effects.
Therefore by neglecting the error in the right hand-side of \eqref{KdV} one ends up with the famous KdV equation.
The KdV equation has a highly oscillating linear part and a nonlinear part with a strong effect (derivative loss).
Since at first order $\partial_tu\approx -\partial_x u$, in \cite{BBM}  the authors introduced the model 
\begin{equation}\label{BBM}
\partial_t u+\partial_x u-\partial_t\partial_x^2u+\partial_x(u^2)=0
\end{equation}
as an alternative of the KdV model (in \eqref{BBM}, we dropped the $\varepsilon_{1,2}$ dependence). 
If we write \eqref{BBM} under the form
$$
\partial_tu+(1-\partial_x^2)^{-1}\partial_xu+(1-\partial_x^2)^{-1}\partial_x(u^2)=0,
$$
we observe that the model \eqref{BBM} has a slowly oscillating linear part coupled with a weak nonlinearity (with smoothing of degree one). 
Despite of this difference with respect to KdV the model \eqref{BBM} is supposed to describe a similar balance between linear and nonlinear effects.

One may naturally consider the following generalization (generalized dispersion) of the KdV equation 
\begin{equation}\label{gKdV}
\partial_t u+\partial_x u-|D_x|^\gamma \partial_xu+\partial_x(u^2)=0.
\end{equation}
For $\gamma=2$, we recover the KdV model but for  
$\gamma=1$ one gets the Benjamin-Ono equation which is a model that can be derived similarly to KdV but in the context of internal waves. 

Following the same argument as for deriving \eqref{BBM}, we end up with the following generalization of \eqref{BBM}
\begin{equation}\label{BBM-gamma}
\partial_t u+\partial_t|D_x|^\gamma u+\partial_x u+\partial_x(u^2)=0.
\end{equation}
For $\gamma=2$ we recover \eqref{BBM} while for $\gamma=1$ we deal with a Benjamin-Ono type model. 
The goal of this work is to study \eqref{BBM-gamma} with initial data distributed by gaussian measures in Sobolev spaces of an arbitrary regularity. 
Observe that, at least formally, if we multiply \eqref{BBM-gamma} by $u$ and integrate in $x$, we obtain that a Sobolev type norm of order $\gamma/2$ of $u$ is conserved by \eqref{BBM-gamma}.
This global information is the only useful a priori bound for \eqref{BBM-gamma} we are aware of. It will play an important role in the analysis below. 
\subsection{Statement of the results}
We consider \eqref{BBM-gamma}, posed on the one dimensional torus. 
Since the $x$ mean value is preserved by  \eqref{BBM-gamma}, we shall consider \eqref{BBM-gamma} as a dynamical system on the Sobolev spaces of zero $x$ mean value functions (equivalently functions having vanishing zero Fourier coefficient). We denote by $H^s$ the Sobolev space of zero mean functions (see the notation section below for a precise definition). 
The next statement shows that \eqref{BBM-gamma} defines a dynamical system on $H^s$, $s\geq \gamma/2$.
\begin{proposition}\label{l1}
Let $\gamma>1$ and $\sigma\geq \gamma/2$. Then for every $u_0\in H^\sigma$ there is a unique global solution of \eqref{BBM-gamma} in $C(\R;H^\sigma)$. 
Moreover, if we denote by  $\Phi(t)$ the flow of \eqref{BBM-gamma} then for every $t\in\R$, $\Phi(t)$ is a continuous bijection on $H^\sigma$. 
\end{proposition}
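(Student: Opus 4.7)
The plan is to view \eqref{BBM-gamma} as an ordinary differential equation in the Banach space $H^\sigma$. Applying $(1+|D_x|^\gamma)^{-1}$ turns it into
\begin{equation*}
\partial_t u=-L(u+u^2),\qquad L:=(1+|D_x|^\gamma)^{-1}\partial_x,
\end{equation*}
whose Fourier symbol $i\xi/(1+|\xi|^\gamma)$ is bounded (in fact $L$ gains $\gamma-1$ derivatives since $\gamma>1$), so $L:H^s\to H^s$ is continuous for every $s$. In particular the reformulated equation is well posed on zero mean functions and preserves that class.

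For local well-posedness I would apply the Cauchy--Lipschitz theorem in $C([-T,T];H^\sigma)$. Since $\sigma\geq\gamma/2>1/2$, the space $H^\sigma$ is a Banach algebra, so
\begin{equation*}
\|L(u+u^2)-L(v+v^2)\|_{H^\sigma}\lesssim \|u-v\|_{H^\sigma}\bigl(1+\|u\|_{H^\sigma}+\|v\|_{H^\sigma}\bigr),
\end{equation*}
yielding a unique solution on some $[-T,T]$ with $T=T(\|u_0\|_{H^\sigma})>0$.

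To globalise I would rely on the a priori bound announced in the introduction: testing \eqref{BBM-gamma} against $u$ makes the terms $\int u\,\partial_x u\,dx$ and $\int u\,\partial_x(u^2)\,dx$ vanish and yields the conservation law
\begin{equation*}
\|u(t)\|_{L^2}^2+\||D_x|^{\gamma/2}u(t)\|_{L^2}^2=\|u_0\|_{L^2}^2+\||D_x|^{\gamma/2}u_0\|_{L^2}^2,
\end{equation*}
which controls $\|u(t)\|_{H^{\gamma/2}}$ and already closes the argument when $\sigma=\gamma/2$. For $\sigma>\gamma/2$ I would combine this with the Moser-type estimate $\|u^2\|_{H^\sigma}\lesssim\|u\|_{L^\infty}\|u\|_{H^\sigma}$ and the embedding $H^{\gamma/2}\hookrightarrow L^\infty$ (again valid thanks to $\gamma>1$) to obtain
\begin{equation*}
\|u(t)\|_{H^\sigma}\leq \|u_0\|_{H^\sigma}+C\int_0^t\bigl(1+\|u_0\|_{H^{\gamma/2}}\bigr)\|u(s)\|_{H^\sigma}\,ds,
\end{equation*}
so that Gronwall yields $\|u(t)\|_{H^\sigma}\leq \|u_0\|_{H^\sigma}\exp\bigl(C(1+\|u_0\|_{H^{\gamma/2}})|t|\bigr)$. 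Iterating the local theory then produces a unique global solution in $C(\R;H^\sigma)$.

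Injectivity of $\Phi(t)$ follows from uniqueness, and surjectivity from the fact that the ODE is solvable in both time directions, so $\Phi(-t)$ provides a two-sided inverse. Continuity of $\Phi(t)$ on $H^\sigma$ comes from essentially the same Gronwall estimate applied to $u-v$ for two solutions. I expect the main obstacle to be propagating the $H^\sigma$ norm for $\sigma>\gamma/2$ in the absence of higher order conservation laws; this is precisely overcome by the interplay between the smoothing character of $L$ (which requires $\gamma>1$) and the $H^{\gamma/2}$ conservation law, which together render the nonlinearity Lipschitz in $H^\sigma$ with a coefficient that depends only on the a priori controlled quantity $\|u_0\|_{H^{\gamma/2}}$.
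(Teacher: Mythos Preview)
Your proposal is correct and follows essentially the same strategy as the paper: both rewrite \eqref{BBM-gamma} using the bounded operator $L=(1+|D_x|^\gamma)^{-1}\partial_x$, obtain local well-posedness via a contraction argument exploiting the product estimate $\|uv\|_{H^\sigma}\lesssim\|u\|_{H^\sigma}\|v\|_{H^{\gamma/2}}+\|u\|_{H^{\gamma/2}}\|v\|_{H^\sigma}$ together with the embedding $H^{\gamma/2}\hookrightarrow L^\infty$, and then globalise using the $H^{\gamma/2}$ conservation law. The only cosmetic difference is that the paper iterates the local bound $\|u\|_{L^\infty([-\tau,\tau];H^\sigma)}\leq 2\|u(0)\|_{H^\sigma}$ (with $\tau$ depending only on $\|u(0)\|_{H^{\gamma/2}}$) to get growth $2^{[|t|/\tau]+1}$, whereas you pass directly through a Gronwall inequality; these are equivalent.
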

Once this result is established one may naturally ask qualitative questions of the global behavior of $(\Phi(t))_{t\in\R}$ as a dynamical system on $H^\sigma$.
As already mentioned, in this work we will study the transport of some gaussian measures by $\Phi(t)$ (for $\sigma$ not necessary small). 

We next introduce these measures.
Let $s\geq 1$ be an integer. Denote by $\mu_s$ the gaussian measure induced by the random Fourier series 
$$
\varphi_s(\omega,x)=\sum_{n\neq 0}\frac{g_n(\omega)}{|n|^{s+\gamma/2}}e^{inx},
$$
where $g_n=\overline{g_{-n}}$ and $(g_n)_{n>0}$ is a system of i.i.d standard complex gaussians, i.e. 
$$
g_n=\frac{1}{\sqrt{2}}(h_n+il_n),
$$
where $h_n,l_n\in {\mathcal N}(0,1)$ are independent.   
Strictly speaking the measure $\mu_s$ also depends on $\gamma$ but we do not explicit this dependence. 
For $\gamma>1$ the measure $\mu_s$ can be seen as a gaussian measure on $H^s$.
Therefore thanks to Proposition~\ref{l1}  for every $\gamma>1$ the flow $\Phi(t)$ is defined $\mu_s$ almost surely, provided $s\geq \gamma/2$. 
Our main goal is to prove the following statement.
\begin{theo}\label{main}
Let  $\gamma >4/3$.
Then for every integer $s\geq \gamma/2$ the measures $\mu_s$ is quasi-invariant by the flow $\Phi(t)$, for every $t\in\R$. 
\end{theo}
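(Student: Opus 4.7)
The plan is to approximate $\Phi$ by a finite dimensional flow for which the Radon--Nikodym derivative of the pushforward measure admits a closed form, and then to establish a uniform $L^p(\mu_s)$ bound on that density via a nonlinear energy estimate. Let $\pi_N$ denote the Fourier projector onto frequencies $|n|\leq N$ and consider the Galerkin-truncated equation
$\partial_t u+\partial_t|D_x|^\gamma u+\partial_x u+\pi_N\partial_x\bigl((\pi_N u)^2\bigr)=0,$
whose flow $\Phi_N(t)$ freezes modes $|n|>N$ and is an ODE on the low-mode space. Writing this ODE in Fourier coordinates, one checks that its vector field is divergence-free: the linear part is imaginary and diagonal (so divergences at $n$ and $-n$ cancel), and the quadratic part contains only the diagonal terms $n_2=0$, which vanish by the mean-zero convention. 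Hence by Liouville's theorem, $\Phi_N(t)$ preserves Lebesgue measure on the truncated phase space, and the pushforward $(\Phi_N(t))_*\mu_s$ is absolutely continuous with respect to $\mu_s$ with explicit density
$f_N(t,u)=\exp\!\Bigl(\tfrac{1}{2}\bigl(\|\pi_N u\|_{H^{s+\gamma/2}}^2-\|\Phi_N(-t)\pi_N u\|_{H^{s+\gamma/2}}^2\bigr)\Bigr).$

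The heart of the matter is to control $f_N(t,\cdot)$ in some $L^p(\mu_s)$, $p>1$, uniformly in $N$. To this end I compute $\tfrac{d}{dt}\|\Phi_N(t)u\|_{H^{s+\gamma/2}}^2=G_s^N(\Phi_N(t)u)$ and, after substituting the equation in Fourier coordinates and symmetrizing over the convolution set $n_1+n_2+n_3=0$, obtain
$G_s^N(u)=\tfrac{2}{3}\,\Im\!\sum_{\substack{n_1+n_2+n_3=0\\|n_j|\leq N}}\!\bigl(m_s(n_1)+m_s(n_2)+m_s(n_3)\bigr)\prod_{j=1}^{3}\hat u(n_j),\quad m_s(n)=\frac{n|n|^{2s+\gamma}}{1+|n|^\gamma}.$
The key structural input is that $m_s(n)-n|n|^{2s}=-n|n|^{2s}/(1+|n|^\gamma)$ is $\gamma$ derivatives smoother than the leading symbol, and that on the resonance set $\{n_1+n_2+n_3=0\}$ the leading symbol $n|n|^{2s}=n^{2s+1}$ admits an algebraic cancellation identity ($n_1^3+n_2^3+n_3^3=3n_1n_2n_3$ when $s=1$, with analogous factorizations for higher integer $s$). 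Combined, these two gains allow one to estimate $|G_s^N(u)|\lesssim \Psi_s(u)$ for a cubic functional $\Psi_s$ whose $\mu_s$-moments of arbitrary order are finite.

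Equipped with this energy estimate, differentiating $\int f_N(t,u)^p\,d\mu_s(u)$ in $t$, changing variables $u=\Phi_N(t)v$ using the Lebesgue preservation, and applying H\"older to the density yields a Gronwall-type inequality and hence $\sup_N\|f_N(t,\cdot)\|_{L^p(\mu_s)}\leq C_{p,s,t}$. The stability of the BBM-$\gamma$ equation underlying Proposition~\ref{l1} gives the convergence $\Phi_N\to\Phi$ in $C([-T,T];H^s)$ for each fixed initial datum, and combined with weak compactness of bounded sets in $L^p(\mu_s)$ this identifies a limit density $f(t,\cdot)\in L^p(\mu_s)$ with $(\Phi(t))_*\mu_s=f(t,\cdot)\mu_s$, which is the desired quasi-invariance. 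The main obstacle is the energy estimate on $G_s^N$: one must extract both the resonance cancellation and the $\gamma$-order smoothing at an exponent low enough to be absorbed by the probabilistic regularity of $\mu_s$. The threshold $\gamma>4/3$ is exactly what the 1D Sobolev embedding $H^{1/6}(\T)\hookrightarrow L^3(\T)$ demands when the leading trilinear contribution is recast as an integral of $(\partial_x u)^3$-type quantities at the Sobolev regularity of $\mu_s$.
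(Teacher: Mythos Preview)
Your overall architecture is correct and matches the paper: Galerkin truncation $\Phi_N$, Liouville for the divergence-free ODE, an explicit Jacobian $\exp(\|\pi_N u\|_{H^{s+\gamma/2}}^2-\|\pi_N\Phi_N(-t)u\|_{H^{s+\gamma/2}}^2)$, a trilinear energy estimate on $G_s^N$, and a limiting argument. Your Fourier-side symmetrization and the identification of the $\gamma>4/3$ threshold via the $L^3$ estimate are the same mechanism as the paper's integration-by-parts on $\int(\partial_x^s v)(\partial_x^{s+1}v)v$ and the interpolation Lemmas~\ref{LP}, \ref{LP-pak}.

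The genuine gap is the sentence ``applying H\"older to the density yields a Gronwall-type inequality and hence $\sup_N\|f_N(t,\cdot)\|_{L^p(\mu_s)}\leq C_{p,s,t}$''. After your change of variables one obtains
\[
\frac{d}{dt}\|f_N(t,\cdot)\|_{L^p(\mu_s)}^p \;=\; c\,(p-1)\int G_s^N(u)\,f_N(t,u)^p\,d\mu_s(u),
\]
and this does \emph{not} close as a Gronwall for any fixed $p$: H\"older with exponent $q<\infty$ on $G_s^N$ forces $\|f_N\|_{L^{pq'}}$ with $q'>1$ on the right, while $q=\infty$ is unavailable because $G_s^N$ is unbounded (it is cubic in a quantity that is merely Gaussian under $\mu_s$). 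Equivalently, in your formulation one must control $G_s^N(\Phi_N(\tau)v)$ for $\tau\in[0,t]$, but the only flow-invariant norm is $H^{\gamma/2}$, and the deterministic bound of Proposition~\ref{exp} in any $H^\sigma$ with $\sigma>s+\gamma/2-1/2$ is useless since such norms are $\mu_s$-a.s.\ infinite. Finite moments of $\Psi_s$ are not enough here; one would need $\Psi_s\in L^\infty$.

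The paper resolves this circularity by abandoning the $L^p$ norm of the density and working instead with the evolution of $t\mapsto\mu_{s,r}(\Phi_N(t)(A))$ for sets $A$, where $\mu_{s,r}$ carries a cutoff on the conserved $H^{\gamma/2}$ norm. Differentiating and using the change-of-variables formula (Proposition~\ref{ch-var}) places $G_s^N(u)$ --- evaluated at time $0$, hence directly controlled by Lemma~\ref{LD} --- against the indicator of $\Phi_N(\bar t)(A)$, so H\"older yields the \emph{closed} differential inequality
\[
\frac{d}{dt}\,\mu_{s,r}(\Phi_N(t)(A))\;\leq\; C\,p^{\beta}\,\bigl(\mu_{s,r}(\Phi_N(t)(A))\bigr)^{1-1/p},\qquad \beta=\tfrac{\kappa}{2}<1.
\]
This is then handled by a Yudovich-type optimization in $p$ (Lemmas~\ref{dve}, \ref{edno}), for which the strict inequality $\beta<1$ (equivalently $\kappa<2$, which is exactly what $\gamma>4/3$ buys in the energy estimate) is indispensable. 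Your sketch is missing precisely this step; once you replace the naive Gronwall by the Yudovich argument on $\mu_{s,r}$, the rest of your outline goes through.
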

We also obtain some quantitative bounds on the densities of the transported measures (see e.g. Lemma~\ref{lyon} below).
However the present information these bounds give on the densities at time $t$ seems quite weak to be useful for giving new long time bounds on the solutions a.s with respect to $\mu_s$ (see e.g.  Remark~\ref{growth} below).
It would be interesting to improve on these bounds. For that reason, we decided to keep the quantitative part of our argument, hoping that it may be of some interest
in eventual further developments.  

We did not try to optimize the restriction $\gamma>4/3$, our goal being to achieve a value of $\gamma$ smaller than $3/2$ which allows to go beyond the Cameron-Martin
threshold (see also Proposition~\ref{CM} below). The assumption that $s$ is an integer is not essential and most probably can be removed. 

The measure $\mu_s$ can be seen as a normalized version of the formal object 
$$\exp(-\|u\|_{H^{s+\gamma/2}}^2)du.$$ 
For $\gamma>1$ the triple $({\rm Id}, H^{s+\gamma/2},H^s)$ forms a Wiener space and $\mu_s$ is the standard Gaussian measure on $H^s$ with variance parameter $1$. The space $H^{s+\gamma/2}$ is the canonical Hilbert space (the so called Cameron-Martin space) in this construction but the space $H^s$ may be replaced by any $H^\sigma$ with $\sigma<s+\frac{\gamma}{2}-\frac{1}{2}$.

In the case $s=0$ thanks to the conservation of 
the Sobolev norm of order $\gamma/2$ one can
get the invariance of the measure $\mu_0$, by employing the well-established methods of invariance of Gibbs measures, at least for $\gamma\geq 2$ (see \cite{ASdS}
for the case $\gamma=2$). The extension to some values of $\gamma<2$ would require some elaborations 
on the local in time analysis in the proof of  Proposition~\ref{l1} (to obtain the existence of the dynamics, locally in time, on the support of the measure $\mu_0$).
\subsection{Comparison with Cameron-Martin type of results} 
It is instructive to compare the result of Theorem~\ref{main} with the Cameron-Martin theorem \cite{CM} and a result by Ramer \cite{Ra}.
Denote by $S(t)$ the free evolution associated to \eqref{BBM-gamma}, i.e.
$$
S(t)=\exp(-t(1+|D_x|^\gamma)^{-1}\partial_x)\,.
$$
Then thanks to the Duhamel formula, the map $\Phi(t)$ can be written as
$$
\Phi(t)(u_0)=S(t)(u_0)+(\big(\gamma-1){\rm \,\,smoother\,\, part\,\, depending\,\,on\,\,}u_0\big).
$$
The free evolution $S(t)$ is preserving the Sobolev regularity and the measure $\mu_s$ (thanks to the invariance of the gaussians by rotations).
Therefore the Cameron-Martin theorem implies that $\mu_s$ is quasi-invariant by maps of the form 
$$
S(t)(u_0)+\big((\frac{1}{2}+\varepsilon){\rm \,\,smoother\,\, part\,\, {\bf independent}\,\, of\,\,}u_0\big),
$$
for some  $\varepsilon>0$. 
Therefore in the range $\gamma\in(4/3,3/2]$ the result of Theorem~\ref{main} goes beyond the naive intuition dictated by the Cameron-Martin theorem.
We find this phenomenon interesting (see also Proposition~\ref{CM} below).

Let us next compare the result of Theorem~\ref{main} with a result by Ramer.
Using once again the above mentioned properties of $S(t)$, one gets that the result of Ramer implies  
that  $\mu_s$ is quasi-invariant by maps of the form 
$$
S(t)(u_0)+\big((1+\varepsilon){\rm \,\,smoother\,\, part\,\, {\bf which\,\, may\,\, depend}\,\, on\,\,}u_0\big),
$$
for some $\varepsilon>0$.
Therefore for $\gamma>2$ the result of Theorem~\ref{main} follows from the work by Ramer \cite{Ra}, i.e. the BBM model \cite{BBM} 
is the border line (but not covered by \cite{Ra}). 
The assumption $\gamma>2$ is done in order to assure the Hilbert-Schmidt property of the perturbation imposed in \cite{Ra} (see Section~\ref{Ramm} below).
The work of Ramer deals with general maps and in this setting the $1+\varepsilon$ regularization condition (in dimension one) looks optimal.
The reason for which Theorem~\ref{main} goes beyond the result of \cite{Ra} is that we deal with very particular non-linear maps induced by Hamiltonian flows.
Here our work is close in spirit to the articles by Cruzeiro \cite{ABC0, ABC} which prove abstract results concerning the existence of quasi-invariant measures under the flows of  (not necessarily  smooth) vector fields. In the work of Cruzeiro the existence of the dynamics (the analogue of Proposition~\ref{l1}) is already a non-trivial issue (see also \cite{BW, AF} for more recent works). Concerning the quasi-invariance statement in \cite{ABC}, it is done under an exponential integrability assumption of the divergence of the corresponding vector field.  One may wish to see the result of Theorem~\ref{main} as an instance where such an integrability condition is checked 
"in practice".   

We end the discussion about the comparison between Theorem~\ref{main} and Cameron-Martin type of results by the following statement.
\begin{proposition}\label{CM}
Let $\gamma\in (4/3,3/2)$.
Consider the linear PDE
\begin{equation}\label{BBM-gamma-linear}
\partial_t u+\partial_t|D_x|^\gamma u+\partial_x u+\partial_x(h)=0,
\end{equation}
where $h\in H^\sigma$ for some $\sigma<s+\frac{\gamma}{2}-\frac{1}{2}$ is fixed.
Suppose that $h\notin H^{s+\frac{\gamma}{2}-\frac{1}{2}}$.
Denote by $\Sigma(t)$ the (well-defined) flow of \eqref{BBM-gamma-linear}. 
Then for $t\neq 0$ the transport of $\mu_s$ by $\Sigma(t)$ is a measure singular with respect to $\mu_s$.
\end{proposition}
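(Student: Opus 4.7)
The plan is to use the linearity of \eqref{BBM-gamma-linear} to reduce the statement to a Cameron--Martin shift. Writing the equation as $\partial_t u = Lu + Lh$ with $L=-(1+|D_x|^\gamma)^{-1}\partial_x$ skew-adjoint and the forcing $Lh$ time-independent, Duhamel yields
$$\Sigma(t)(u_0) = S(t)u_0 + (S(t)-I)h.$$
Since $S(t)$ acts on Fourier modes by multiplication by the unimodular symbol $e^{-itn/(1+|n|^\gamma)}$ and the complex Gaussians $g_n$ defining $\mu_s$ are rotation invariant, one has $S(t)_*\mu_s=\mu_s$. Hence $\Sigma(t)_*\mu_s$ is the translate of $\mu_s$ by the deterministic vector $v(t):=(S(t)-I)h$, and the classical Cameron--Martin theorem (applied to the Gaussian $\mu_s$ with Cameron--Martin space $H^{s+\gamma/2}$) together with the Feldman--H\'ajek dichotomy asserts that this translate is singular with respect to $\mu_s$ as soon as $v(t)\notin H^{s+\gamma/2}$.

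It remains to verify $v(t)\notin H^{s+\gamma/2}$ for $t\neq 0$. The Fourier coefficients of $v(t)$ are $\bigl(e^{-itn/(1+|n|^\gamma)}-1\bigr)\hat h_n$. Because $\gamma>1$, the phase $tn/(1+|n|^\gamma)$ tends to $0$ as $|n|\to\infty$, so for $|n|$ beyond some threshold $N_0(t)$ the elementary bound $|e^{ix}-1|\asymp|x|$ gives
$$\bigl|e^{-itn/(1+|n|^\gamma)}-1\bigr|^2 \;\asymp\; \frac{t^2 n^2}{(1+|n|^\gamma)^2} \;\asymp\; t^2 |n|^{2-2\gamma}.$$
Weighting by $|n|^{2(s+\gamma/2)}$ and discarding the finitely many remaining modes, one obtains
$$\|v(t)\|_{H^{s+\gamma/2}}^2 \;\asymp\; t^2\,\|h\|_{H^{s+1-\gamma/2}}^2,$$
so $v(t)\in H^{s+\gamma/2}$ if and only if $h\in H^{s+1-\gamma/2}$. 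The hypothesis $\gamma<3/2$ yields the strict inclusion $H^{s+1-\gamma/2}\subset H^{s+\gamma/2-1/2}$, and combining this with $h\notin H^{s+\gamma/2-1/2}$ forces $h\notin H^{s+1-\gamma/2}$, hence $v(t)\notin H^{s+\gamma/2}$ for $t\neq 0$. The Cameron--Martin dichotomy then delivers the claimed singularity.

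The argument is essentially elementary; no single step poses a real obstacle, and the content lies entirely in the numerology. At high frequencies $S(t)-I$ smooths by exactly $\gamma-1$ derivatives, whereas moving from the support threshold $H^{s+\gamma/2-1/2}$ into the Cameron--Martin space $H^{s+\gamma/2}$ costs a full $1/2$ extra derivative. When $\gamma<3/2$ one has $\gamma-1<1/2$, so the deterministic drift $v(t)$ gains strictly less smoothing than Cameron--Martin demands, and $h$ in the support with nearly maximal regularity produces a singular push-forward. This is precisely what allows Theorem~\ref{main} to go beyond the Cameron--Martin threshold in the range $\gamma\in(4/3,3/2)$; the lower bound $\gamma>4/3$ itself plays no role in the present proposition.
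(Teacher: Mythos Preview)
The proposal is correct and follows essentially the same route as the paper. Both use the $S(t)$-invariance of $\mu_s$ to reduce to a Cameron--Martin translation, identify the drift vector (the paper writes it as the Duhamel integral $f(t)$, you as $(S(t)-I)h$; these coincide), and then show via the same Fourier-side exponent count that this drift lies outside $H^{s+\gamma/2}$ exactly because $\gamma<3/2$ forces $s+1-\gamma/2>s+\gamma/2-1/2$. The only cosmetic difference is that you invoke the Feldman--H\'ajek dichotomy as a black box, while the paper carries out the singularity argument by hand (constructing a $k\in H^{s+\gamma/2}$ whose pairing with $f(t)$ diverges and exhibiting the separating set of full measure).
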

In Proposition~\ref{CM}, the fixed function $h$ is supposed to have the typical regularity on the support of $\mu_s$. 
In other words, if $u$ is a solution of  \eqref{BBM-gamma} with data on the support of $\mu_s$ then we take $h$ with the regularity of $u^2$ obtained by
the deterministic estimates of Section~2 below (and not more).  
Therefore,  for $\gamma\in (4/3,3/2)$, 
the result of Theorem~\ref{main} seems to go beyond a Cameron-Martin type result and it relies on a  "regularization property" of the flow associated with 
\eqref{BBM-gamma}.
\subsection{Organization of the paper}
The remaining part of this manuscript is organized as follows.
We complete this introduction by introducing some notations. 
In Section~2 we prove the existence of the dynamics and some useful approximation properties.
In Section~3 we obtain the result of Theorem~\ref{main} for $\gamma>2$ as a consequence of \cite{Ra}.
Next, in Section~4 we establish a useful infinite dimensional change of variables formula. In Section~5 we 
get the suitable for our purposes (deterministic) energy estimate.
In Section~6 we establish the averaging with respect to $\mu_s$ properties, needed for our analysis. In Section~7 we establish the measure evolution property by an argument in the spirit of the proof of the global regularity for the $2d$ Euler equation. In Section~8 we complete the proof of the main result by some basic measure theory considerations. Finally, in Section~9 we prove Proposition~\ref{CM} as a simple consequence of the Cameron-Martin argument. 
\subsection{Notation}
If a real valued $f$ is given by its Fourier expansion 
$$
f(x)=\sum_{n\in \Z}\hat{f}(n)e^{inx},\quad \hat{f}(n)=
\overline{\hat{f}(-n)},
$$
for $s\in\R$, we define its Sobolev norm as
\begin{equation}\label{norm}
\|f\|_{{\mathcal H}^s}=\|f\|_{s}=\Big(\sum_{n\in \Z}\langle n\rangle^{2s}
|\hat{f}(n)|^2\Big)^{\frac{1}{2}}\,\, ,
\end{equation}
where $\langle n\rangle =1+|n|$.
We denote by ${\mathcal H}^s$ the space of real valued 
$f$ such that \eqref{norm} is finite. It is well known that ${\mathcal H}^s$ is a Hilbert space (with the natural scalar product).
We denote by $H^s$ the closed subspace of  ${\mathcal H}^s$ of functions with zero Fourier coefficient , i.e.
$$
H^s=\{f\in {\mathcal H}^s\,:\, \hat{f}(0)=0\}\,.
$$
Since the mean value is preserved by \eqref{BBM-gamma}, we have that $H^s$ is a natural space for the solutions of \eqref{BBM-gamma}.
We consider $H^s$, equipped with the norm
\begin{equation}\label{norm_bis}
\|f\|_{H^s}=\frac{1}{\sqrt{2}}\Big(\sum_{n\in \Z}|n|^{2s}
|\hat{f}(n)|^2\Big)^{\frac{1}{2}}
=
\Big(\sum_{n=1}^{\infty}|n|^{2s}
|\hat{f}(n)|^2\Big)^{\frac{1}{2}}
\,\, .
\end{equation}
For elements in $H^s$ the norms \eqref{norm} and \eqref{norm_bis} are equivalent.
We define the Fourier multipliers $|D_x|^s$ as 
$$
|D_x|^s(f)(x)=\sum_{n\in \Z}|n|^s\hat{f}(n)e^{inx}\,.
$$
Then 
$$
\|f\|_{H^s}=\frac{1}{2\sqrt{\pi}}\||D_x|^s f\|_{L^2}
$$
We denote by $\pi_N$ the Dirichlet projector, i.e.
$$
\pi_{N}(f)=\sum_{|n|\leq N}\hat{f}(n)e^{inx}\,.
$$
We denote by $\mu_{s,r}$, the measure defined by
$$
d\mu_{s,r}(u)=\chi_{r}(u)d\mu_{s}(u),
$$
where 
$$
\chi_r(u)=\chi\Big(r^{-1}
\big(
\|u\|_{L^2}^2+4\pi\|u\|^2_{H^{\frac{\gamma}{2}}}
\big)
\Big)
$$
and 
$\chi : \R\rightarrow\R$ denotes the characteristic function of the set $[0,1]$.

For $s\in\R$ and $R\geq 0$, we set $B_{R,s}=\{u\in H^s\,:\, \|u\|_{H^s}\leq R \}$.
\section{Construction and general properties of the flows and the approximated flows}
\subsection{Existence of the dynamics}
Consider the truncated version of \eqref{BBM-gamma}
\begin{equation}\label{BBM-gamma-N}
\partial_t u+\partial_t|D_x|^\gamma u+\partial_x u+\partial_x\pi_{N}((\pi_{N}u)^2)=0\,.
\end{equation}
We consider \eqref{BBM-gamma-N}, posed on the one dimensional torus and with initial data in $H^s$. 
\begin{lemme}\label{es}
Let $\gamma>1$. Then for every $\sigma\geq 0$,
$$
\|(1+|D_x|^{\gamma})^{-1}\partial_x(uv)\|_{\sigma}\leq C_{\sigma}\big( \|u\|_{\sigma}\|v\|_{\gamma/2}+\|u\|_{\gamma/2}\|v\|_{\sigma}\big)\,.
$$
\end{lemme}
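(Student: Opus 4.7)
The plan is to absorb the Fourier multiplier and reduce to a bilinear Sobolev product estimate. The operator $(1+|D_x|^\gamma)^{-1}\partial_x$ has symbol $m(n)=in/(1+|n|^\gamma)$, which for $\gamma>1$ satisfies $|m(n)|\leq C\langle n\rangle^{1-\gamma}\leq C$. Hence the operator is bounded on $\mathcal H^\sigma$, and the inequality reduces to
\begin{equation*}
\|uv\|_\sigma \leq C_\sigma\big(\|u\|_\sigma\|v\|_{\gamma/2}+\|u\|_{\gamma/2}\|v\|_\sigma\big),\quad \sigma\geq 0.
\end{equation*}

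I would prove this product estimate directly on the Fourier side. From $\widehat{uv}(n)=\sum_k \hat u(k)\hat v(n-k)$ and the triangle-type inequality $\langle n\rangle \leq \langle k\rangle + \langle n-k\rangle$, for $\sigma\geq 0$ we get
\begin{equation*}
\langle n\rangle^\sigma |\widehat{uv}(n)| \leq 2^\sigma\Big[\big(\langle\cdot\rangle^\sigma|\hat u|\big)\ast |\hat v|(n) + |\hat u|\ast\big(\langle\cdot\rangle^\sigma|\hat v|\big)(n)\Big].
\end{equation*}
Taking $\ell^2_n$ norms and invoking Young's inequality $\|\alpha\ast\beta\|_{\ell^2}\leq \|\alpha\|_{\ell^2}\|\beta\|_{\ell^1}$ on each term yields
\begin{equation*}
\|uv\|_\sigma \leq 2^\sigma\big(\|u\|_\sigma\|\hat v\|_{\ell^1}+\|\hat u\|_{\ell^1}\|v\|_\sigma\big).
\end{equation*}

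The remaining, and only substantive, step is the Fourier Sobolev embedding $\mathcal H^{\gamma/2}\hookrightarrow \mathcal F\ell^1$: by Cauchy--Schwarz,
\begin{equation*}
\|\hat w\|_{\ell^1}\leq \Big(\sum_{n}\langle n\rangle^{-\gamma}\Big)^{1/2}\|w\|_{\gamma/2} \lesssim \|w\|_{\gamma/2},
\end{equation*}
the sum being finite exactly because $\gamma>1$. Applied to $u$ and $v$ in the previous display, this finishes the proof. The hypothesis $\gamma>1$ is thus used twice, once to bound the symbol $m$ and once for the embedding; no step presents a genuine obstacle.
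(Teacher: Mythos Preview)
Your proof is correct and follows the same overall strategy as the paper: bound the Fourier multiplier $(1+|D_x|^\gamma)^{-1}\partial_x$ on $\mathcal H^\sigma$ and reduce to a bilinear product estimate, using $\gamma/2>1/2$ for an embedding. The only difference is cosmetic: the paper quotes the classical estimate $\|uv\|_\sigma\lesssim \|u\|_\sigma\|v\|_{L^\infty}+\|u\|_{L^\infty}\|v\|_\sigma$ together with the Sobolev embedding $\mathcal H^{\gamma/2}\subset L^\infty$, whereas you work directly on the Fourier side via Young's inequality and the embedding $\mathcal H^{\gamma/2}\hookrightarrow \mathcal F\ell^1$; since $\mathcal F\ell^1\hookrightarrow L^\infty$, your route is a self-contained version of the same argument.
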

\begin{proof}
Since $(1+|D_x|^{\gamma})^{-1}\partial_x$ is bounded on ${\mathcal H}^\sigma$,
the proof of direct consequence of the classical product estimate
\begin{equation}\label{AG}
\|uv\|_{\sigma}\leq 
C_{\sigma}\big(\|u\|_{\sigma}\|v\|_{L^\infty}+\|u\|_{L^\infty}\|v\|_{\sigma}\big)
\end{equation}
and the Sobolev embedding ${\mathcal H}^{\gamma/2}\subset L^\infty$. 
\end{proof}
Using Lemma~\ref{es} one gets the following uniform in $N$ local well-posedness result for \eqref{BBM-gamma-N}.
\begin{lemme}\label{local}
Let $\sigma\geq \gamma/2$. Then for every $u(0)\in H^\sigma$ there is a time $\tau>0$ depending only on $\|u(0)\|_{H^{\gamma/2}}$ and a unique solution of 
\eqref{BBM-gamma-N} in $C([-\tau,\tau];H^\sigma)$ with initial data $u(0)$. Moreover $\|u\|_{L^\infty([-\tau,\tau];H^\sigma)}\leq 2\|u(0)\|_{H^\sigma}$.
\end{lemme}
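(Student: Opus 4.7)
The plan is to run a standard Banach fixed-point argument on the Duhamel formulation of \eqref{BBM-gamma-N}, with two key features: uniformity in $N$ and a lifespan depending only on the lower norm $\|u(0)\|_{H^{\gamma/2}}$. Rewriting \eqref{BBM-gamma-N} as
$$
\partial_t u = -(1+|D_x|^\gamma)^{-1}\partial_x u - (1+|D_x|^\gamma)^{-1}\partial_x \pi_N\bigl((\pi_N u)^2\bigr),
$$
and recalling that $S(t)=\exp(-t(1+|D_x|^\gamma)^{-1}\partial_x)$ is a unitary group on $H^\sigma$ (the Fourier multiplier has purely imaginary symbol), I set
$$
\Psi(u)(t) = S(t)u(0) - \int_0^t S(t-\tau)(1+|D_x|^\gamma)^{-1}\partial_x \pi_N\bigl((\pi_N u(\tau))^2\bigr)\,d\tau.
$$

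I would work in the closed ball
$$
X_\tau = \bigl\{ u \in C([-\tau,\tau];H^\sigma) : \|u\|_{L^\infty_tH^\sigma}\le 2\|u(0)\|_{H^\sigma} \bigr\}
$$
equipped with the natural metric. Since $S(t)$ and $\pi_N$ are bounded by $1$ on every $H^\sigma$, Lemma~\ref{es} applied with one factor in $H^\sigma$ and the other in $H^{\gamma/2}$ gives
$$
\bigl\|(1+|D_x|^\gamma)^{-1}\partial_x\pi_N((\pi_N u)^2)\bigr\|_{H^\sigma}
\le C_\sigma \|u\|_{H^\sigma}\|u\|_{H^{\gamma/2}}.
$$
For $u\in X_\tau$ and using $\sigma\ge \gamma/2$ to dominate $\|u\|_{H^{\gamma/2}}\le \|u\|_{H^\sigma}$ when convenient, this yields
$$
\|\Psi(u)\|_{L^\infty_tH^\sigma}\le \|u(0)\|_{H^\sigma} + \tau\, C_\sigma\, (2\|u(0)\|_{H^\sigma})\,(2\|u(0)\|_{H^{\gamma/2}}),
$$
so stability of $X_\tau$ holds as soon as $4 C_\sigma \tau \|u(0)\|_{H^{\gamma/2}} \le 1$; crucially the relevant smallness involves only the $H^{\gamma/2}$ norm of the data.

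For the contraction estimate I use the identity $u^2-v^2 = (u-v)(u+v)$ combined with Lemma~\ref{es}, giving
$$
\bigl\| \Psi(u)-\Psi(v)\bigr\|_{L^\infty_tH^\sigma}
\le \tau\, C_\sigma\, \bigl(\|u\|_{L^\infty_t H^\sigma}+\|v\|_{L^\infty_t H^\sigma}\bigr)\,\|u-v\|_{L^\infty_tH^\sigma},
$$
where once again $\|u\pm v\|_{H^{\gamma/2}}$ and $\|u-v\|_{H^{\gamma/2}}$ are absorbed into the $H^\sigma$ norms using the embedding $H^\sigma\hookrightarrow H^{\gamma/2}$. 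Choosing $\tau$ of the form $c/\|u(0)\|_{H^{\gamma/2}}$ for a suitable small constant $c$ makes $\Psi$ a contraction on $X_\tau$, and the Banach fixed-point theorem provides the unique solution with the stated bound.

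The main subtlety is not an obstacle but rather the structural point to exploit: Lemma~\ref{es} is a \emph{tame} estimate—only one factor sits in $H^\sigma$ while the other is at the low regularity $H^{\gamma/2}$. This is what allows the existence time $\tau$ to depend only on $\|u(0)\|_{H^{\gamma/2}}$ rather than on the full $H^\sigma$ norm. This tameness is precisely what will later permit propagation of the solution on intervals controlled by the conserved $\gamma/2$ energy, and uniformity in $N$ is automatic since $\pi_N$ is contractive on every $H^\sigma$ and plays no role in the estimates.
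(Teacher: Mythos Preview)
Your argument contains a genuine gap at the crucial point: you claim the lifespan $\tau$ depends only on $\|u(0)\|_{H^{\gamma/2}}$, but the estimates you write down do not support this. Membership in your ball $X_\tau$ gives only $\|u(t)\|_{H^\sigma}\le 2\|u(0)\|_{H^\sigma}$, and hence $\|u(t)\|_{H^{\gamma/2}}\le \|u(t)\|_{H^\sigma}\le 2\|u(0)\|_{H^\sigma}$; there is no mechanism to conclude $\|u(t)\|_{H^{\gamma/2}}\le 2\|u(0)\|_{H^{\gamma/2}}$ from the $H^\sigma$ ball alone. Consequently your stability bound should read
\[
\|\Psi(u)\|_{L^\infty_tH^\sigma}\le \|u(0)\|_{H^\sigma}+\tau C_\sigma\bigl(2\|u(0)\|_{H^\sigma}\bigr)^2,
\]
and your contraction constant is $\tau C_\sigma\cdot 4\|u(0)\|_{H^\sigma}$. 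Both force $\tau\sim (1+\|u(0)\|_{H^\sigma})^{-1}$, not $(1+\|u(0)\|_{H^{\gamma/2}})^{-1}$. The tameness of Lemma~\ref{es} is indeed the right structural point, but a single contraction in $H^\sigma$ does not exploit it.

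The paper's proof separates the two regularities: it first runs the contraction in $C([-\tau,\tau];H^{\gamma/2})$, which genuinely yields $\tau=c(1+\|u(0)\|_{H^{\gamma/2}})^{-1}$ and the bound $\|u\|_{L^\infty_tH^{\gamma/2}}\le 2\|u(0)\|_{H^{\gamma/2}}$. Only then is the $H^\sigma$ regularity propagated, via the a~priori estimate
\[
\|u\|_{L^\infty_tH^\sigma}\le \|u(0)\|_{H^\sigma}+C_\sigma\tau\,\|u\|_{L^\infty_tH^\sigma}\,\|u\|_{L^\infty_tH^{\gamma/2}},
\]
where now the low-norm factor is already controlled by $2\|u(0)\|_{H^{\gamma/2}}$, so the same $\tau$ suffices. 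An equivalent one-step fix for your approach is to work in the intersection ball $\{\,\|u\|_{L^\infty_tH^\sigma}\le 2\|u(0)\|_{H^\sigma},\ \|u\|_{L^\infty_tH^{\gamma/2}}\le 2\|u(0)\|_{H^{\gamma/2}}\,\}$ and contract in the $H^{\gamma/2}$ metric; but as written, your proof does not deliver the claimed dependence of $\tau$.
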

\begin{proof}
We proceed by a fixed point argument. 
One may rewrite \eqref{BBM-gamma-N}  as the integral equation 
\begin{equation}\label{fp}
u(t)=S(t)(u(0))-\int_{0}^{t}S(t-\tau)\big((1+|D_x|^{\gamma})^{-1}\partial_x\pi_{N}((\pi_{N}u(\tau))^2)\big)d\tau.
\end{equation}
Using Lemma~\ref{es}, one can look for a fixed point of the map
$
F_{u(0)}(u),
$
defined by the right hand-side of \eqref{fp} in a suitable ball of the space 
$C([-\tau,\tau];H^{\gamma/2})$, where $\tau=c(1+\|u(0)\|_{H^{\gamma/2}})^{-1}$ and $c$ is a small constant. 
Indeed, using Lemma~\ref{es} and the uniform bounds for $\pi_N$ on $H^{\gamma/2}$, we can obtain that for $u\in C([-\tau,\tau];H^{\gamma/2})$,
$$
\|F_{u(0)}(u)\|_{L^\infty([-\tau,\tau];H^{\gamma/2})}
\leq \|u_0\|_{H^{\gamma/2}}+C\tau
\|u\|^2_{L^\infty([-\tau,\tau];H^{\gamma/2})}
\,.
$$
Therefore, the space $E$ defined by
$$
E\equiv \{u\in C([-\tau,\tau];H^{\gamma/2})\,:\, \|u\|_{L^\infty([-\tau,\tau];H^{\gamma/2})}\leq 2\|u(0)\|_{H^{\gamma/2}}\}
$$
is such that $F_{u(0)}(E)\subset E$, provided the constant $c$ in the definition of $\tau$ is small enough.
By invoking once again Lemma~\ref{es} and the uniform bounds for $\pi_N$ on $H^{\gamma/2}$, we can obtain that for $u,v\in E$,
$$
\|F_{u(0)}(u)-F_{u(0)}(v)\|_{L^\infty([-\tau,\tau];H^{\gamma/2})}
\leq \frac{1}{2}\|u-v\|_{L^\infty([-\tau,\tau];H^{\gamma/2})}\,,
$$
by possibly taking an even smaller value of the constant $c$ involved in the definition of $\tau$. Therefore 
$F_{u(0)}$ is a contraction on $E$. The fixed point of this contraction provides the solution of \eqref{BBM-gamma-N} we look for.

Let us now turn the propagation the $H^\sigma$ regularity of the obtained solution $u$. This regularity is preserved for very small times of order $(1+\|u(0)\|_{H^\sigma})^{-1}$ by the fixed point argument that we have just presented. In order to show that the regularity is preserved for longer times we use  Lemma~\ref{es} and the uniform bounds for $\pi_N$ on $H^\sigma$, in order to obtain that
$$
\|u\|_{L^\infty([-\tau,\tau];H^\sigma)}\leq \|u_0\|_{H^\sigma}+C_{\sigma}\tau\
\|u\|_{L^\infty([-\tau,\tau];H^\sigma)}\|u\|_{L^\infty([-\tau,\tau];H^{\gamma/2})}\,.
$$
Therefore the $H^\sigma$ regularity is preserved for time $\tau=c(1+\|u(0)\|_{H^{\gamma/2}})^{-1}$, where the constant $c$ is sufficiently small depending only on 
$\sigma$.

The uniqueness statement follows by using that if $u_1$ and $u_2$ are two solutions of  \eqref{BBM-gamma-N}  in 
$C([-\tau,\tau];H^\sigma)$ then by using  Lemma~\ref{es} and the uniform bounds for $\pi_N$ on $H^\sigma$, we can obtain that for any interval $[-\tau_1,\tau_1]$,
$\tau_1\leq\tau$,
$$
\|u_1-u_2\|_{L^\infty([-\tau_1,\tau_1];H^\sigma)}
\leq C_{\sigma}\tau_1 \|u_1-u_2\|_{L^\infty([-\tau_1,\tau_1];H^\sigma)}
\|u_1+u_2\|_{L^\infty([-\tau,\tau];H^\sigma)}\,.
$$
We therefore conclude that $u_1=u_2$ on $[-\tau_1,\tau_1]$, where $\tau_1$ is such that
$$
\tau_1 C_{\sigma}\big( \|u_1\|_{L^\infty([-\tau,\tau];H^\sigma)}
+ \|u_2\|_{L^\infty([-\tau,\tau];H^\sigma)}\big)<\frac{1}{2}\,.
$$
Then we cover $[-\tau,\tau]$ by intervals of size $\tau_1$ and we repeat the previous reasoning to conclude that $u_1=u_2$ on $[-\tau,\tau]$.
The continuity statements are consequences of the previous analysis. 
This completes the proof of Lemma~\ref{local}.
\end{proof}
By the invariance of \eqref{BBM-gamma} 
with respect to time translations, the statement of Lemma~\ref{local} with initial data given at any time $t_0\in \R$ on the interval $[t_0-\tau,t_0+\tau]$, where $\tau$ 
depends only of $\|u(t_0)\|_{H^{\gamma/2}}$. 

One can similarly obtain the following local well-posedness result for \eqref{BBM-gamma}.
\begin{lemme}\label{local_bis}
Let $\sigma\geq \gamma/2$. Then for every $u(0)\in H^\sigma$ there is a time $\tau>0$ depending only on $\|u(0)\|_{H^{\gamma/2}}$ and a unique solution of \eqref{BBM-gamma} in $C([-\tau,\tau];H^\sigma)$ with initial data $u(0)$. Moreover 
\begin{equation}\label{local_bound}
\|u\|_{L^\infty([-\tau,\tau];H^\sigma)}\leq 2\|u(0)\|_{H^\sigma}
\end{equation}
\end{lemme}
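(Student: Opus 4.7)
The plan is to mimic the proof of Lemma~\ref{local} line by line, simply deleting the Dirichlet projector $\pi_{N}$ from the Duhamel formula. Concretely, I would rewrite \eqref{BBM-gamma} as the integral equation
\begin{equation*}
u(t)=S(t)(u(0))-\int_{0}^{t}S(t-\tau)\big((1+|D_x|^{\gamma})^{-1}\partial_x(u(\tau)^2)\big)d\tau,
\end{equation*}
and note that since $S(t)$ is given by a Fourier multiplier of modulus one, it is an isometry on every $H^{\sigma}$. Applying Lemma~\ref{es} (with $\sigma=\gamma/2$) to the integrand then gives, for any $u\in C([-\tau,\tau];H^{\gamma/2})$,
\begin{equation*}
\Big\|\int_{0}^{t}S(t-\tau)(1+|D_x|^{\gamma})^{-1}\partial_x(u^2)d\tau\Big\|_{L^{\infty}_{t}H^{\gamma/2}}\leq C\tau\|u\|^{2}_{L^{\infty}_{t}H^{\gamma/2}}.
\end{equation*}

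I would then run a Picard iteration in the closed ball
\begin{equation*}
E=\{u\in C([-\tau,\tau];H^{\gamma/2})\,:\,\|u\|_{L^{\infty}_{t}H^{\gamma/2}}\leq 2\|u(0)\|_{H^{\gamma/2}}\},
\end{equation*}
with $\tau=c(1+\|u(0)\|_{H^{\gamma/2}})^{-1}$ and $c$ a small absolute constant, exactly as in Lemma~\ref{local}. Stability and contraction are obtained through the identity $u^2-v^2=(u-v)(u+v)$ together with Lemma~\ref{es}, so the fixed point exists and is unique in $E$.

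For propagation of the $H^{\sigma}$ regularity with $\sigma\geq\gamma/2$, I would bootstrap on the already-controlled $H^{\gamma/2}$ norm. Applying Lemma~\ref{es} at level $\sigma$ to $u^2$ gives
\begin{equation*}
\|u\|_{L^{\infty}([-\tau,\tau];H^{\sigma})}\leq \|u(0)\|_{H^{\sigma}}+C_{\sigma}\tau\,\|u\|_{L^{\infty}([-\tau,\tau];H^{\sigma})}\|u\|_{L^{\infty}([-\tau,\tau];H^{\gamma/2})},
\end{equation*}
and since $\tau$ is chosen in terms of $\|u(0)\|_{H^{\gamma/2}}$ only, the coefficient $C_{\sigma}\tau\|u\|_{L^{\infty}_{t}H^{\gamma/2}}$ can be absorbed on the left, yielding \eqref{local_bound}. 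The uniqueness statement is again obtained by applying Lemma~\ref{es} to $u_{1}^{2}-u_{2}^{2}=(u_{1}-u_{2})(u_{1}+u_{2})$, giving a linear Gronwall-type inequality for $\|u_{1}-u_{2}\|_{H^{\sigma}}$ that is first closed on a short subinterval and then iterated across $[-\tau,\tau]$.

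There is no real obstacle here beyond checking that the proof of Lemma~\ref{local} goes through without the $\pi_{N}$ projectors, which is immediate because the only role those projectors played was to give uniform-in-$N$ bounds on $H^{\sigma}$; without them the corresponding estimate is trivial. The continuity in the initial datum is a byproduct of the contraction argument (Lipschitz dependence on $u(0)$ inside the ball $E$), and it directly extends to continuity of the flow, which will be used later in the paper.
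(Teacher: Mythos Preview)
Your proposal is correct and matches the paper's approach exactly: the paper does not give a separate proof but simply states that ``one can similarly obtain'' Lemma~\ref{local_bis}, meaning precisely that the argument of Lemma~\ref{local} goes through verbatim once the projectors $\pi_N$ are removed. Your write-up faithfully carries this out.
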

The next lemma is of key importance.
\begin{lemme}\label{boris}
Let $u$ be a local solution of \eqref{BBM-gamma-N}, given by Proposition~\ref{local}. 
Then 
\begin{equation}\label{CL}
\frac{d}{dt}
\Big(
\|u(t)\|_{L^2}^2+4\pi\|u(t)\|_{H^{\gamma/2}}^2\Big)=0.
\end{equation}
A similar statement holds for the solutions of \eqref{BBM-gamma}.
\end{lemme}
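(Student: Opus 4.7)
The plan is the standard energy identity: take the $L^2(\mathbb{T})$ inner product of \eqref{BBM-gamma-N} with $u(t)$ and show that each of the four resulting terms is either half the time derivative of one of the summands in \eqref{CL} or identically zero. Write $(\cdot,\cdot)$ for the real $L^2$ pairing. The solution belongs to $C([-\tau,\tau];H^\sigma)$ for some $\sigma\geq\gamma/2$, which is enough regularity (via Lemma~\ref{es}) to pair \eqref{BBM-gamma-N} against $u$ term by term and to exchange $d/dt$ with the integration on $\mathbb{T}$.

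For the first two terms, $(u,\partial_t u)=\tfrac12\tfrac{d}{dt}\|u\|_{L^2}^2$, and self-adjointness of $|D_x|^\gamma$ together with the normalization recorded in the notation subsection ($\||D_x|^{\gamma/2}f\|_{L^2}^2=4\pi\|f\|_{H^{\gamma/2}}^2$) give
\[
(u,\partial_t|D_x|^\gamma u)=(|D_x|^{\gamma/2}u,|D_x|^{\gamma/2}\partial_t u)=\tfrac12\tfrac{d}{dt}\||D_x|^{\gamma/2}u\|_{L^2}^2=2\pi\tfrac{d}{dt}\|u\|_{H^{\gamma/2}}^2.
\]
The linear transport term $(u,\partial_x u)$ vanishes by integration by parts on the circle. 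Summing yields \eqref{CL} once the nonlinear pairing is shown to be zero.

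The only step requiring care, and the one that motivates the truncation being placed symmetrically around $(\pi_N u)^2$, is the vanishing of $(u,\partial_x\pi_N((\pi_N u)^2))$. Here I use that $\pi_N$ is a Fourier multiplier with real symbol $\mathbf{1}_{|n|\leq N}$, hence self-adjoint on $L^2$, and that it commutes with $\partial_x$. Moving one $\pi_N$ onto $u$ and integrating by parts gives
\[
(u,\partial_x\pi_N((\pi_N u)^2))=-((\pi_N u)^2,\partial_x\pi_N u)=-\tfrac13\int_{\mathbb{T}}\partial_x((\pi_N u)^3)\,dx=0.
\]
This is the heart of the argument; it is also the step that would fail for a one-sided truncation such as $\pi_N(u^2)$ or $u\,\pi_N u$, and it explains the particular form chosen in \eqref{BBM-gamma-N}. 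For the untruncated equation \eqref{BBM-gamma} the same computation is immediate with $\pi_N$ replaced by the identity: $(u,\partial_x(u^2))=\tfrac13\int_{\mathbb{T}}\partial_x(u^3)\,dx=0$. In both cases one concludes $\tfrac{d}{dt}(\|u\|_{L^2}^2+4\pi\|u\|_{H^{\gamma/2}}^2)=0$, as claimed.
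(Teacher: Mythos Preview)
Your proof is correct and follows essentially the same route as the paper: pair the equation with $u$, identify the two time-derivative terms, and use self-adjointness of $\pi_N$ together with $\int\partial_x((\pi_N u)^3)=0$ to kill the nonlinear contribution. The added remarks on the normalization $\||D_x|^{\gamma/2}u\|_{L^2}^2=4\pi\|u\|_{H^{\gamma/2}}^2$ and on why the symmetric placement of $\pi_N$ is needed are accurate and helpful, but the underlying argument is identical to the paper's.
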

\begin{proof}
Let $u$ be a local solution of \eqref{BBM-gamma-N}, given by Proposition~\ref{local}.  Then we take the $L^2$ scalar product of
\eqref{BBM-gamma-N}  with $u$ and  using that
$$
(\partial_x\pi_{N}((\pi_{N}u)^2),u)=\frac{1}{3}\int \partial_x((\pi_{N}u)^3)=0,\quad (u,\partial_x u)=0,
$$ 
and
$$
(\partial_t u,u)=\frac{1}{2}\partial_t\|u\|_{L^2}^2,\quad
(\partial_t |D_x|^\gamma u,u)=\frac{1}{2}\partial_t\||D_x|^{\gamma/2}u\|_{L^2}^2\,,
$$
we obtain that  \eqref{CL} holds.
\end{proof}
Using Lemma~\ref{local}, Lemma~\ref{local_bis} and the conservation law displayed by Lemma~\ref{boris}  
one readily gets Proposition~\ref{l1} and also the global well-posedness in $H^\sigma$ of \eqref{BBM-gamma-N}, "uniformly" in $N$.
Let us denote by $\Phi(t)$ and $\Phi_N(t)$ the global flows on $H^{\sigma}$, $\sigma\geq \gamma/2$ of \eqref{BBM-gamma} and \eqref{BBM-gamma-N} respectively.
By iterating the local bounds we get the following statement.
\begin{proposition}\label{exp}
Let $\sigma\geq \gamma/2$.  For every every $R>0$ there is a constant $C$ such that
for every $v\in H^\sigma$ such that $\|v\|_{H^{\gamma/2}}\leq R$ and every $N\geq 1$, one has the the bound
$$
\|\Phi(t)(v)\|_{H^\sigma}+\|\Phi_{N}(t)(v)\|_{H^\sigma}
\leq e^{C(1+|t|)}\|v\|_{H^\sigma}\,.
$$
\end{proposition}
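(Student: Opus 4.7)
The plan is to combine the $H^{\gamma/2}$-conservation law of Lemma~\ref{boris} with an iteration of the local bounds of Lemmas~\ref{local} and~\ref{local_bis}, uniformly in $N$.

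First, I would upgrade the hypothesis $\|v\|_{H^{\gamma/2}}\le R$ to a global-in-time bound of the same type. Applying \eqref{CL} to $u(t)=\Phi(t)(v)$ and to $u(t)=\Phi_N(t)(v)$ yields
$$\|u(t)\|_{L^2}^{2}+4\pi\|u(t)\|_{H^{\gamma/2}}^{2}=\|v\|_{L^2}^{2}+4\pi\|v\|_{H^{\gamma/2}}^{2}.$$
Since the functions under consideration have zero mean, their Fourier modes satisfy $|n|\ge 1$, so $\|v\|_{L^2}\le C\|v\|_{H^{\gamma/2}}$; hence the right-hand side is controlled by $C'R^{2}$. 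Consequently there is $R_{0}=R_{0}(R)$ such that $\|u(t)\|_{H^{\gamma/2}}\le R_{0}$ for every $t\in\mathbb{R}$ and every $N\ge 1$, whether $u$ is a solution of \eqref{BBM-gamma} or of \eqref{BBM-gamma-N}.

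Second, I would invoke Lemma~\ref{local} (respectively Lemma~\ref{local_bis}) at an \emph{arbitrary} initial time $t_{0}$. Since $\|u(t_{0})\|_{H^{\gamma/2}}\le R_{0}$, there exists $\tau_{0}=\tau_{0}(R_{0})>0$, independent of $t_{0}$, of $N$, and of $\sigma$ (up to a multiplicative constant), together with the doubling bound
$$\|u\|_{L^\infty([t_{0}-\tau_{0},\,t_{0}+\tau_{0}];H^\sigma)}\le 2\|u(t_{0})\|_{H^\sigma}.$$
Uniformity in $N$ is the only nontrivial point, and it follows from the fact that the proof of Lemma~\ref{local} uses $\pi_{N}$ solely through the universal estimate $\|\pi_{N}\|_{H^s\to H^s}\le 1$.

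Third, I would iterate. For $t\ge 0$, write $t=n\tau_{0}+r$ with $n\in\mathbb{N}$ and $0\le r<\tau_{0}$. Applying the previous step at $t_{0}=0,\tau_{0},\dots,(n-1)\tau_{0}$ gives $\|u(k\tau_{0})\|_{H^\sigma}\le 2^{k}\|v\|_{H^\sigma}$, and a final application at $t_{0}=n\tau_{0}$ yields $\|u(t)\|_{H^\sigma}\le 2^{n+1}\|v\|_{H^\sigma}$. Since $n\le |t|/\tau_{0}$, this gives the announced bound $e^{C(1+|t|)}\|v\|_{H^\sigma}$ with $C=(\log 2)(1+\tau_{0}^{-1})$, which depends only on $R$. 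The case $t<0$ is symmetric by time reversibility of the flows.

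The \emph{a priori} danger is that iterating a doubling bound $\lceil |t|/\tau\rceil$ times might produce a super-exponential growth if $\tau$ had to shrink at each step. The fact that it does not is precisely the content of Lemmas~\ref{local}--\ref{local_bis}: the local time depends only on the $H^{\gamma/2}$-norm of the datum, and \emph{this} norm is frozen in time by Lemma~\ref{boris}. So the main (and essentially only) obstacle—obtaining a global a priori bound uniform in $N$ on the lower norm that governs the local theory—is already dispatched by Lemma~\ref{boris}, and the rest is a clean iteration.
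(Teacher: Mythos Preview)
Your proof is correct and follows essentially the same approach as the paper: use the conservation law \eqref{CL} to freeze the $H^{\gamma/2}$-norm, obtain a uniform local time $\tau$ from Lemmas~\ref{local}--\ref{local_bis}, and iterate the doubling bound $[|t|/\tau]+1$ times to get $\|u(t)\|_{H^\sigma}\le 2^{[|t|/\tau]+1}\|v\|_{H^\sigma}$. Your write-up is in fact more explicit than the paper's (which is a three-line sketch), the only minor imprecision being that $\tau_0$ does depend on $\sigma$ through the constant in Lemma~\ref{local}, but since $\sigma$ is fixed this is harmless.
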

\begin{proof}
Let $u$ be a solution of \eqref{BBM-gamma-N}.
Using \eqref{CL} for every $t$ we can iterate the local bound \eqref{local_bound}  $[|t|/\tau]+1$ times to obtain that
$$
\|u(t)\|_{H^{\sigma}}\leq 2^{[|t|/\tau]+1}\|u(0)\|_{H^\sigma}\,.
$$
A similar analysis applies for the solutions of \eqref{BBM-gamma}.
This completes the proof of Proposition~\ref{exp}.
\end{proof}
As a consequence of Proposition~\ref{exp}, one also gets the following statement. 
\begin{proposition}\label{union}
Let $\sigma\geq \gamma/2$. 
Then for every $T>0$ and $R>0$ there exists $R'>0$ such that 
$$
\bigcup_{N\in\N}\,\bigcup_{t\in[-T,T]}\Big(\Phi_N(t)(B_{R,\sigma})\cup \Phi(t)(B_{R,\sigma})\Big)\subset B_{R',\sigma}\,.
$$
\end{proposition}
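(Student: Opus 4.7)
The plan is to reduce the statement directly to Proposition~\ref{exp}. The key observation is that the only way for the constant in Proposition~\ref{exp} to depend on the solution is through the $H^{\gamma/2}$ norm of the initial data, and this norm is automatically controlled by the $H^\sigma$ norm whenever $\sigma\geq\gamma/2$.

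More precisely, I would proceed as follows. Fix $T>0$ and $R>0$, and let $v\in B_{R,\sigma}$. Since $\sigma\geq\gamma/2$, the embedding $H^\sigma\hookrightarrow H^{\gamma/2}$ gives $\|v\|_{H^{\gamma/2}}\leq\|v\|_{H^\sigma}\leq R$. Now apply Proposition~\ref{exp} with this value of $R$: there exists a constant $C=C(R,\sigma)$, independent of $v$, $t$ and $N$, such that for every $t\in\R$ and every $N\geq 1$,
$$
\|\Phi(t)(v)\|_{H^\sigma}+\|\Phi_N(t)(v)\|_{H^\sigma}\leq e^{C(1+|t|)}\|v\|_{H^\sigma}\leq e^{C(1+|t|)}R.
$$
Restricting to $t\in[-T,T]$ and setting $R'\equiv e^{C(1+T)}R$ yields $\Phi(t)(v),\Phi_N(t)(v)\in B_{R',\sigma}$, which is exactly the claimed inclusion.

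There is no real obstacle here: the hard work has been done in Proposition~\ref{exp}, which itself relied on the conservation law \eqref{CL} (to keep $\|u(t)\|_{H^{\gamma/2}}$ bounded by its initial value, so the local existence time $\tau$ of Lemmas~\ref{local} and \ref{local_bis} can be iterated uniformly) combined with the uniform-in-$N$ linear growth per local step. So Proposition~\ref{union} should be presented as an immediate corollary, with the short computation above as its proof.
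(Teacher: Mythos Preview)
Your proposal is correct and matches the paper's approach exactly: the paper simply states that Proposition~\ref{union} follows as a consequence of Proposition~\ref{exp} and gives no further argument. Your short computation, using $\|v\|_{H^{\gamma/2}}\leq\|v\|_{H^\sigma}\leq R$ and then setting $R'=e^{C(1+T)}R$, is precisely the intended justification.
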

\subsection{Approximation properties}
We have the following basic approximation property.
\begin{proposition}\label{standard}
Let $\sigma\geq \gamma/2$. Fix $t\in\R$, $R>0$ and a compact $K\subset B_{R,\sigma}$. 
Then for every $\varepsilon>0$ there exists $N_0$ such that for every $N\geq N_0$, 
$$
\|\Phi(t)(v)-\Phi_{N}(t)(v)\|_{H^\sigma}<\varepsilon \, ,\quad \forall\, v\in K\,.
$$
\end{proposition}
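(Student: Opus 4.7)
The plan is to run a Gronwall estimate for the $H^\sigma$ norm of the difference $w_N(t) := \Phi(t)v - \Phi_N(t)v$ using the Duhamel formulations of \eqref{BBM-gamma} and \eqref{BBM-gamma-N} together with the product bound of Lemma~\ref{es}. First, fix $T>|t|$ and invoke Proposition~\ref{union} to obtain $R' > 0$ such that, writing $u(\tau) := \Phi(\tau)v$ and $u_N(\tau) := \Phi_N(\tau)v$, both lie in $B_{R',\sigma}$ for all $v \in K$, $\tau \in [-T, T]$ and $N \in \N$; this furnishes the a priori bounds needed to close Gronwall.

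A decisive preliminary observation is that $(I - \pi_N) u_N$ evolves by the linear flow $S(\tau)$. Indeed, applying $(I - \pi_N)$ to \eqref{BBM-gamma-N} and using that Fourier multipliers commute with $\pi_N$ while $(I - \pi_N)\pi_N = 0$, one finds that $(I - \pi_N) u_N(\tau) = S(\tau)(I - \pi_N) v$. Since $S(\tau)$ is unitary on every Sobolev space, $\|(I - \pi_N) u_N(\tau)\|_{H^\sigma} = \|(I - \pi_N)v\|_{H^\sigma}$, and this tends to zero uniformly on the compact set $K$ as $N \to \infty$.

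Subtracting the Duhamel formulas for $u$ and $u_N$ and decomposing
\[
u^2 - \pi_N((\pi_N u_N)^2) = (I - \pi_N)(u^2) + \pi_N\bigl(w_N\,(u + \pi_N u_N)\bigr) + \pi_N\bigl((I - \pi_N) u_N \cdot (u + \pi_N u_N)\bigr),
\]
then applying Lemma~\ref{es}, the unitarity of $S$ and the boundedness of $\pi_N$ on Sobolev spaces, one arrives at
\[
\|w_N(t)\|_{H^\sigma} \leq C \int_0^{|t|} \bigl(\eta_N(\tau) + R'\,\|w_N(\tau)\|_{H^\sigma}\bigr)\, d\tau,
\]
where $\eta_N(\tau) = \|(I - \pi_N)(u(\tau)^2)\|_{H^\sigma} + R'\,\|(I - \pi_N) v\|_{H^\sigma}$.

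It remains to show that $\sup_{v \in K,\ \tau \in [-T, T]} \eta_N(\tau) \to 0$ as $N \to \infty$. From Lemma~\ref{local_bis} and the iteration used to obtain Proposition~\ref{l1}, the flow map $(v,\tau) \mapsto \Phi(\tau) v$ is jointly continuous from $H^\sigma \times \R$ to $H^\sigma$. Hence $\{u(\tau) : v \in K,\ \tau \in [-T, T]\}$ is compact in $H^\sigma$, and since $\sigma \geq \gamma/2 > 1/2$ makes $H^\sigma$ a Banach algebra (via \eqref{AG}), the set of pointwise squares is compact as well. Uniform convergence of the high-frequency tail $\|(I - \pi_N) g\|_{H^\sigma} \to 0$ on compact subsets of $H^\sigma$ supplies the required uniform smallness, and Gronwall's lemma delivers $\sup_{v \in K}\|w_N(t)\|_{H^\sigma} \to 0$. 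The main obstacle is precisely this last uniformity claim for the term $\|(I - \pi_N)(u(\tau)^2)\|_{H^\sigma}$; all remaining ingredients are routine given Lemma~\ref{es} and Proposition~\ref{union}.
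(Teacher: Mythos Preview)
Your argument is correct and follows the same overall strategy as the paper (Duhamel, the product bound of Lemma~\ref{es}, and compactness to control high-frequency tails), with two organizational differences worth recording. The paper takes $w_N = u - \pi_N u_N$ rather than $u - u_N$, which yields the two-term decomposition $u^2 - \pi_N((\pi_N u_N)^2) = (I-\pi_N)(u^2) + \pi_N\big(w_N(u+\pi_N u_N)\big)$ without your third term; it then iterates the resulting short-time bound over subintervals of length $\tau$ rather than applying Gronwall on the whole interval, and only at the very end recovers $u - u_N$ by writing $u-u_N = w_N - (I-\pi_N)u_N$ and invoking a separate Lemma~\ref{compact_bis} to make $\|(I-\pi_N)\Phi_N(t)(v)\|_{H^\sigma}$ uniformly small. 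Your observation that $(I-\pi_N)u_N(\tau) = S(\tau)(I-\pi_N)v$ is exactly the content of that lemma made explicit, and arguably cleaner. Finally, the paper's handling of $\|(I-\pi_N)(u(\tau)^2)\|_{H^\sigma}$ (Lemma~\ref{compact}) is a hands-on $\varepsilon/2$--$\varepsilon/4$ covering argument in place of your appeal to joint continuity of the flow together with the algebra property of $H^\sigma$; the substance is the same.
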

\begin{proof}
Let $u$ and $u_N$, be solutions of  \eqref{BBM-gamma} and \eqref{BBM-gamma-N} with the same initial data $v\in K$.
Then $w_N\equiv u-\pi_N u_N$ solves the equation 
\begin{equation}\label{difference}
\partial_t w_N+\partial_t|D_x|^\gamma w_N+\partial_x w_N+
\partial_x(u^2-\pi_{N}((\pi_{N}u)^2))=0\,.
\end{equation}
Next, we can write
$$
u^2-\pi_{N}((\pi_{N}u_N)^2)=(1-\pi_{N})(u^2)+\pi_{N}\big(w_{N}(u+\pi_{N}u_N)\big)\,.
$$
By using the estimate of Lemma~\ref{es} and Proposition~\ref{union}, we obtain that there is a constant $C$ only depending on $\sigma$, $t$ and $R$ such that
\begin{equation}\label{unif}
\|w_N\|_{L^\infty([-\tau,\tau];H^\sigma)}
\leq  C\tau \|w_N\|_{L^\infty([-\tau,\tau];H^\sigma)}
+C\|(1-\pi_{N})(u^2)\|_{L^\infty([-\tau,\tau];H^\sigma)}\,.
\end{equation}
We are now in position to use the following lemma.
\begin{lemme}\label{compact}
For every $\varepsilon>0$ there is $N_0$ such that for every $N\geq N_0$ and every $v\in K$,
$$
\|(1-\pi_{N})((\Phi(t)(v))^2)\|_{L^\infty([-\tau,\tau];H^\sigma)}<\varepsilon.
$$
\end{lemme}
\begin{proof}
Let $\varepsilon>0$. Since the map $v\mapsto \Phi(t)(v)$ is continuous from $H^\sigma$ to $C([-\tau,\tau];H^\sigma)$, we obtain that the image of $K$ under this map is a compact in $C([-\tau,\tau];H^\sigma)$. 
Therefore, using   Lemma~\ref{es} and Proposition~\ref{union}, we obtain that there exists a finite set $J$ such that $(\Phi(t)(v_j))_{j\in J}$ has the propety,
$$
\forall\, v\in K,\quad\exists\, j\in J,\,\,\,\, 
\|(\Phi(t)(v))^2-(\Phi(t)(v_j))^2\|_{L^\infty([-\tau,\tau];H^\sigma)}<\frac{\varepsilon}{2}\,.
$$
Therefore, it remains to show that for every $j$ in the finite set $J$
there is $N_0$ such that for every $N\geq N_0$,
$$
\|(1-\pi_{N})((\Phi(t)(v_j))^2)\|_{L^\infty([-\tau,\tau];H^\sigma)}<\frac{\varepsilon}{2}.
$$
Now, thanks to the (uniform) continuity of the map $t\mapsto (\Phi(t)(v_j))^2$ from $[-\tau,\tau]$ to $H^\sigma$, we obtain that there is a finite set
$(t_l)_{l\in\Lambda}$ of $[-\tau,\tau]$ such that
$$
\forall\, t\in [-\tau,\tau], \quad\exists\, l\in\Lambda,\quad
\|(\Phi(t)(v_j))^2-(\Phi(t_l)(v_j))^2\|_{H^\sigma}<\frac{\varepsilon}{4}\,.
$$
We are therefore reduced to showing that for every $j\in J$ and every $l\in\Lambda$ there is $N_0$ such that for every $N\geq N_0$,
$$
\|(1-\pi_{N})((\Phi(t_l)(v_j))^2)\|_{H^\sigma}<\frac{\varepsilon}{4}.
$$
The last statement is a direct consequence of the definition of the Sobolev space $H^\sigma$.
This completes the proof of Lemma~\ref{compact}.
\end{proof}
Using Lemma~\ref{compact}, we obtain that if the constant $C$ involved in \eqref{unif} satisfies $C\tau<\frac{1}{2}$ then
$$
\|w_N\|_{L^\infty([-\tau,\tau];H^\sigma)}
\leq  \Lambda(N,v),
$$
where here and the sequel of the proof we denote by $\Lambda(N,v)$ a generic 
quantity such that for every $\varepsilon>0$ there is $N_0$ such that for every $N\geq N_0$ and every $v\in K$,
$
\Lambda(N,v)<\varepsilon.
$

Next, we can perform the same analysis to arrive at the bound 
$$
\|w_N\|_{L^\infty([\tau,2\tau];H^\sigma)}
\leq \|u(\tau)-\pi_Nu_{N}(\tau)\|_{H^\sigma}+
C\tau \|w_N\|_{L^\infty([-\tau,\tau];H^\sigma)}+\Lambda(N,v),
$$
which implies 
$$
\|w_N\|_{L^\infty([\tau,2\tau];H^\sigma)} \leq  \Lambda(N,v)\,.
$$
Now we can cover the interval $[-t,t]$ by intervals of size $\tau$ and repeat the previous analysis to arrive at the bound 
\begin{equation}\label{I}
\|u(t)-\pi_Nu_{N}(t)\|_{H^\sigma}\leq \Lambda(N,v),
\end{equation}
Next, we write
\begin{equation}\label{CS}
u-u_N=u-\pi_Nu_N-(1-\pi_N)u_N\,.
\end{equation}
We now invoke the following lemma.
\begin{lemme}\label{compact_bis}
Fix $t\in\R$.
For every $\varepsilon>0$ there is $N_0$ such that for every $N\geq N_0$ and every $v\in K$,
$$
\|(1-\pi_{N})(\Phi_N(t)(v))\|_{H^\sigma)}<\varepsilon.
$$
\end{lemme}
The proof of Lemma~\ref{compact_bis} is similar (simpler) to the proof of Lemma~\ref{compact} and therefore will be omitted.
We now come back to \eqref{CS} and we use \eqref{I} and Lemma~\ref{compact_bis}.
This completes the proof of Proposition~\ref{standard}.
\end{proof}
As a consequence of Proposition~\ref{standard}, we also have the following approximation property. 
\begin{proposition}\label{approximation}
Let $\sigma\geq \gamma/2$. Fix $t\in\R$, $R>0$ and a compact $A\subset B_{R,\sigma}$. 
For every $\varepsilon>0$ there exists $N_0$ such that for every $N\geq N_0$,  
$$
\Phi(t)(A)\subset \Phi_{N}(t)(A+B_{\varepsilon,\sigma}).
$$
\end{proposition}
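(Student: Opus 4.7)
The plan is to derive the inclusion by applying Proposition~\ref{standard} to the \emph{inverse} flow $\Phi_N(-t)$ on the compact set $K:=\Phi(t)(A)$. Since Proposition~\ref{l1} (and the analogous global well-posedness for \eqref{BBM-gamma-N}) gives that $\Phi(t)$ and $\Phi_N(t)$ are continuous bijections on $H^\sigma$ with respective inverses $\Phi(-t)$ and $\Phi_N(-t)$, the desired inclusion is equivalent to the assertion that $\Phi_N(-t)(\Phi(t)(v))\in A+B_{\varepsilon,\sigma}$ for every $v\in A$. The natural element of $A$ that should approximate $\Phi_N(-t)(\Phi(t)(v))$ is $v$ itself, because $\Phi(-t)(\Phi(t)(v))=v$ and one expects $\Phi_N(-t)$ to be close to $\Phi(-t)$ uniformly on the compact set $K$.

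Concretely, since $\Phi(t):H^\sigma\to H^\sigma$ is continuous and $A$ is compact, $K:=\Phi(t)(A)$ is compact in $H^\sigma$; by Proposition~\ref{union} it is contained in $B_{R',\sigma}$ for some $R'>0$ depending on $R$, $\sigma$ and $t$. Fix $\varepsilon>0$. Applying Proposition~\ref{standard} at time $-t$ with this compact set $K$ produces $N_0$ such that, for every $N\geq N_0$ and every $w\in K$,
$$\|\Phi(-t)(w)-\Phi_N(-t)(w)\|_{H^\sigma}<\varepsilon.$$
Given $v\in A$, set $w:=\Phi(t)(v)\in K$ and $v':=\Phi_N(-t)(w)$. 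Then $\Phi(-t)(w)=v$, so the previous bound reads $\|v-v'\|_{H^\sigma}<\varepsilon$, which means $v'\in A+B_{\varepsilon,\sigma}$. On the other hand $\Phi_N(t)(v')=w=\Phi(t)(v)$, whence $\Phi(t)(v)\in \Phi_N(t)(A+B_{\varepsilon,\sigma})$, as required.

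There is no substantial obstacle: the statement is essentially a transcription of Proposition~\ref{standard} in backward time, combined with the invertibility of the flows. The two checks one has to make are that Proposition~\ref{standard} is formulated for arbitrary $t\in\R$ (so it applies equally well at $-t$), and that the image $\Phi(t)(A)$ is compact and sits inside a fixed ball of $H^\sigma$; the latter is exactly what Proposition~\ref{union} provides.
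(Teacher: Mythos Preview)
Your proof is correct and follows essentially the same route as the paper: both write $\Phi(t)(v)=\Phi_N(t)\bigl(\Phi_N(-t)\Phi(t)(v)\bigr)$ and show that $\Phi_N(-t)\Phi(t)(v)$ lies in $A+B_{\varepsilon,\sigma}$. The only difference is in the final estimate: the paper applies Proposition~\ref{standard} at time $t$ on $A$ and then invokes a stability (Lipschitz-type) bound for $\Phi_N(-t)$ via Proposition~\ref{exp}, whereas you apply Proposition~\ref{standard} directly at time $-t$ on the compact $K=\Phi(t)(A)$, which is slightly more economical.
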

\begin{proof}
Let $u\in \Phi(t)(A)$. This means that there exists $v\in A$ such that $u=\Phi(t)(v)$. Write
$$
u=\Phi_{N}(t)\Big(\Phi_{N}(-t)\Phi(t)(v)\Big)\,.
$$
Set $w_N=\Phi_{N}(-t)\Phi(t)(v)$. The goal is to show that 
$
w_N\in A+B_{\varepsilon,\sigma}
$
for every $N\geq N_{0}(\varepsilon,t, A)$. 
For that purpose, we can write
$$
w_N=v+z_N,\quad z_{N}\equiv \Phi_{N}(-t)\Phi(t)(v)-v\,.
$$
Since $v\in A$, the issue is to check that $z_N\in B_{\varepsilon,\sigma}
$
for every $N\geq N_{0}(\varepsilon,t,A)$. 
We can write
$$
z_{N}=\Phi_{N}(-t)(\Phi(t)(v)-\Phi_N(t)(v))\,.
$$
Using Proposition~\ref{exp}, we obtain that
\begin{equation}\label{parvo}
\|\Phi_{N}(-t)(\Phi(t)(v)-\Phi_N(t)(v))\|_{H^\sigma}
\leq C(t,R)\|\Phi(t)(v)-\Phi_N(t)(v)\|_{H^\sigma}\,.
\end{equation}
Using Proposition~\ref{standard}, we obtain that
\begin{equation}\label{vtoro}
\|\Phi(t)(v)-\Phi_N(t)(v)\|_{H^\sigma}\leq \Lambda(N,v),
\end{equation}
where 
for every $\varepsilon>0$ there is $N_0$ such that for every $N\geq N_0$ and every $v\in A$,
$
\Lambda(N,v)<\varepsilon.
$
A combination of \eqref{parvo} and \eqref{vtoro} implies that
$\|z_{N}\|_{H^\sigma}<\varepsilon,$ provided $N\geq N_0(\varepsilon, t,A)$.
This completes the proof of Proposition~\ref{approximation}.
\end{proof}
\section{The case $\gamma>2$ as a consequence of Ramer's result}\label{Ramm}
We will show in this section that in the case $\gamma>2$ the result of Theorem~\ref{main}  follows from \cite{Ra}.
Thanks to the Duhamel formula, we can write 
$$
\Phi(t)=S(t)\circ\Psi(t),
$$ 
where
\begin{equation}\label{id}
\Psi(t)(u_0)=u_0-\int_{0}^{t}S(-\tau)\big((1+|D_x|^{\gamma})^{-1}\partial_x((\Phi(\tau)(u_0))^2)\big)d\tau.
\end{equation}
Thanks to the invariance of the complex gaussians under rotations, the measure $\mu_s$ is invariant under $S(t)$ (see Lemma~\ref{rotation} bellow).
Therefore, we need to show the quasi-invariance of $\mu_s$ under $\Psi(t)$.
Take $\sigma_1>1/2$ to be chosen later. Write $\Psi(t)={\rm Id}+K(t)$, where $K(t)$ is defined via \eqref{id}.
Thanks to \cite{Ra} and the analysis of the previous section, 
the measure $\mu_s$ is quasi-invariant under $\Psi(t)$, if we can show that for $u_0$ in a bounded set of $H^{s+\frac{\gamma}{2}-\sigma_1}$ and $|t|\leq 1$ small enough (depending only on the fixed bounded set) we have that the map $(DK(t))_{u_0}$ is a Hilbert-Schmidt map on $H^{s+\frac{\gamma}{2}}$.
One now can compute and arrive at the expression
$$
(DK(t))_{u_0}(v_0)=-2\int_{0}^{t}S(-\tau)\big((1+|D_x|^{\gamma})^{-1}\partial_x(\Phi(\tau)(u_0)v(\tau)\big)d\tau,
$$
where $v$ is a solution of the linear problem
\begin{equation}\label{pb_v}
\partial_t v+\partial_t|D_x|^\gamma v+\partial_x v+2\partial_x(\Phi(t)(u_0)v)=0,\quad
v|_{t=0}=v_0\,.
\end{equation}
Next, for $\sigma_2>1/2$ to be chosen later, we write
$$
(DK(t))_{u_0}=(1+|D_x|)^{-\sigma_2}\circ A,
$$
where
$$
A(v_0)\equiv-2\int_{0}^{t}S(-\tau)\big((1+|D_x|)^{\sigma_2}
(1+|D_x|^{\gamma})^{-1}
\partial_x(\Phi(\tau)(u_0)v(\tau)\big)d\tau.
$$
Since $(1+|D_x|)^{-\sigma_2}$ is a Hilbert-Schmidt map on  $H^{s+\frac{\gamma}{2}}$ and the later property is preserved by compositions with bounded maps, we are reduced to show that the map $A$ is bounded on  $H^{s+\frac{\gamma}{2}}$. The assumption $\gamma>2$ will be used in the verification of this property.
Using \eqref{AG}, we can write
\begin{eqnarray*}
\|A(v_0)\|_{H^{s+\frac{\gamma}{2}}} & \leq & C \sup_{\tau\in[0,t]}\|\Phi(\tau)(u_0)v(\tau)\|_{H^{s-\frac{\gamma}{2}+\sigma_2+1}}
\\
& \leq &
C \sup_{\tau\in[0,t]}
\Big(\|\Phi(\tau)(u_0)\|_{H^{s-\frac{\gamma}{2}+\sigma_2+1}}\|v(\tau)\|_{H^{s-\frac{\gamma}{2}+\sigma_2+1}}\Big)\,.
\end{eqnarray*}
We now estimate each of the factors. Thanks to the assumption $\gamma>2$ for $\sigma_1$ and $\sigma_2$ close enough to $1/2$, we have
$$
s-\frac{\gamma}{2}+\sigma_2+1<s+\frac{\gamma}{2}-\sigma_1\,.
$$
Therefore, using the results of the previous section, we obtain that for $\tau\in[0,t]$,
$$
\|\Phi(\tau)(u_0)\|_{H^{s-\frac{\gamma}{2}+\sigma_2+1}}\leq\|\Phi(\tau)(u_0)\|_{H^{s+\frac{\gamma}{2}-\sigma_1}}
\leq C\|u_0\|_{H^{s+\frac{\gamma}{2}-\sigma_1}}\,.
$$
Next, coming back to \eqref{pb_v} and using \eqref{AG}, we get that for $\tau\in [0,t]$,
$$
\|v(\tau)\|_{H^{s-\frac{\gamma}{2}+\sigma_2+1}}
\leq 
\|v_0\|_{H^{s-\frac{\gamma}{2}+\sigma_2+1}}
+
C|t| \sup_{\tau\in[0,t]}\|\Phi(\tau)(u_0)v(\tau)\|_{H^{s-\frac{\gamma}{2}+\sigma_2+1}}
$$
(here we only use that $\gamma\geq 1$). 
Therefore using that for $\sigma_2$ close enough to $1/2$, $s-\frac{\gamma}{2}+\sigma_2+1<s+\frac{\gamma}{2}$, we obtain 
$$
\|v(\tau)\|_{H^{s-\frac{\gamma}{2}+\sigma_2+1}}
\leq 
\|v_0\|_{H^{s+\frac{\gamma}{2}}}
+
C|t|\|u_0\|_{H^{s+\frac{\gamma}{2}-\sigma_1}}
 \sup_{\tau\in[0,t]}\|v(\tau)\|_{H^{s-\frac{\gamma}{2}+\sigma_2+1}}
$$
Hence that for $t$ small enough, but still only depending on the bounded set of $H^{s+\frac{\gamma}{2}-\sigma_1}$ where $u_0$ ranges, one has the bound
$$ 
\|v(\tau)\|_{H^{s-\frac{\gamma}{2}+\sigma_2+1}}
\leq 
2\|v_0\|_{H^{s+\frac{\gamma}{2}}},\quad \tau\in[0,t]\,.
$$
Therefore, we obtain that the map $A$ is bounded on $H^{s+\frac{\gamma}{2}}$. This in tun implies that $(DK(t))_{u_0}$ is a Hilbert-Schmidt map and consequently we can apply the result of \cite{Ra} to get the result of Theorem~\ref{main} for $\gamma>2$.

From now on we shall suppose that  $\gamma\in(4/3,2]$. This is the range of $\gamma$ which does not seem covered by \cite{Ra}. 
In this region of $\gamma$, we shall use more involved properties of the transformation $\Phi(t)$.
These considerations seem to go beyond the analysis of general maps close to the identity done in \cite{Ra}. 
Let us also mention that
the result of \cite{Ra} under Hilbert-Schmidt assumption is already a quite non trivial result using a stochastic interpretation of the obtained densities (the straightforward result being obtained under a trace class assumption). 
\section{A variable change formula}
For every $N$, we denote by $E_N$ the real vector space spanned by 
$$
(\cos(nx),\sin(nx))_{1\leq n\leq N}\,.
$$
We equip $E_N$ with the natural scalar product.
We endow $E_N$ with a Lebesgue measure $L_N$ as follows.
If 
$$
(\pi_Nu)(x)=\sum_{0<|n|\leq N} u_n\, e^{inx},\quad u_n=\overline{u_{-n}}
$$
and $u_n=a_n+ib_n$, $(a_n,b_n)\in \R^2$ then
$$
(\pi_Nu)(x)=\sum_{n=1}^{N}(a_n (2\cos(nx))+b_n (-2\sin(nx)))\,.
$$
Therefore, we denote by $L_N$ the Lebesgue measure on $E_N$ build with respect to the orthogonal basis 
$$
(2\cos(nx),-2\sin(nx))_{1\leq n\leq N}\,.
$$
Next, we denote by $E_N^\perp$ the orthogonal complement of $E_N$ in $H^s$. 
We endow $E_N^\perp$ with the measure $\mu^\perp_{s;N}$ which is the image measure under the map
$$
\omega\longmapsto \sum_{|n|>N}\frac{g_n(\omega)}{|n|^{s+\gamma/2}}e^{inx}\,.
$$
We can now see the measure  $\mu_s$ as a product measure on $E_N\times E_N^\perp$ as follows
\begin{eqnarray*}
d\mu_s & = & 
\gamma_N e^{-\sum_{n=1}^{N}|n|^{2s+\gamma}(a_n^2+b_n^2)}
dL_N(a_1,b_1,\dots,a_N,b_N) \, d\mu^\perp_{s;N}
\\
& = &
\gamma_N e^{-\|\pi_N u\|_{H^{s+\frac{\gamma}{2}}}^2}\,\, du_1...du_N \, d\mu^\perp_{s;N}
\end{eqnarray*}
where $\gamma_N$ is a suitable renormalization factor and
$$
(\pi_Nu)(x)=\sum_{0<|n|\leq N} u_n\, e^{inx},\quad u_n=\overline{u_{-n}},\quad u_n=a_n+ib_n,\, (a_n,b_n)\in\R^2\,.
$$
We have the following "change of variables rule".
\begin{proposition}\label{ch-var} 
For $A$ a Borel set of $H^s$ one has the identity
\begin{eqnarray*}
\mu_{s,r}(\Phi_N(t)(A)) & = &\int_{\Phi_N(t)(A)} \chi_{r}(u) d\mu_s(u)
\\
& = &
\gamma_{N}\int_{A} 
\chi_{r}(u) 
e^{-\|\pi_N(\Phi_N(t)(u))\|^2_{H^{s+\frac{\gamma}{2}}}} du_1...du_N\, d\mu^\perp_{s;N}
  \end{eqnarray*}
\end{proposition}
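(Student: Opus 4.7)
The strategy is to exploit a structural feature of the truncated equation \eqref{BBM-gamma-N}: the nonlinearity $\partial_x\pi_N((\pi_N u)^2)$ involves only Fourier modes with $|n|\leq N$ and depends only on $\pi_N u$. Consequently the flow splits as
$$
\Phi_N(t)(u)=\Phi_N^{\mathrm{low}}(t)(\pi_N u)+S(t)\big((1-\pi_N)u\big),
$$
where $\Phi_N^{\mathrm{low}}(t)$ is the flow on $E_N$ of the finite-dimensional Hamiltonian ODE obtained by retaining only the modes $|n|\leq N$, while the high-frequency tail evolves linearly under the free flow $S(t)$.

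I would first check that $S(t)$ preserves the Gaussian measure $\mu^\perp_{s;N}$ on $E_N^\perp$: on each Fourier coefficient with $|n|>N$, $S(t)$ acts as multiplication by a complex number of modulus one, and the joint law of the standard complex Gaussians $(g_n)_{|n|>N}$ is invariant under such rotations (this is the rotation invariance alluded to in Section~\ref{Ramm}). I would then establish that $\Phi_N^{\mathrm{low}}(t)$ preserves the Lebesgue measure $L_N$ on $E_N$. The restriction of \eqref{BBM-gamma-N} to $E_N$ can be written as $\partial_t u=J\,\nabla H(u)$ with
$$
J=-(1+|D_x|^\gamma)^{-1}\partial_x,\qquad H(u)=\tfrac12\|u\|_{L^2}^2+\tfrac13\!\int(\pi_N u)^3\,dx,
$$
where $J$ is antisymmetric, $u$-independent and invertible on $E_N$. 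The vector field is therefore Hamiltonian for the constant symplectic form $\omega(\cdot,\cdot)=(J^{-1}\cdot,\cdot)_{L^2}$ on $E_N$, and Liouville's theorem gives preservation of the associated symplectic volume, which is a constant multiple of $L_N$. Equivalently, a direct Wirtinger computation in the coordinates $u_n=a_n+ib_n$ shows that the divergence equals $\sum_{0<n\leq N}2\,\mathrm{Re}(\partial_{u_n}F_n)$ and vanishes, since the linear contribution $-in/(1+|n|^\gamma)$ is purely imaginary and the convolution $\sum_{n_1+n_2=n}u_{n_1}u_{n_2}$ contains no $u_n$ term (the constraints $n_1+n_2=n$, $n_j\neq 0$ exclude $n_1=n$ and $n_2=n$).

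With these two invariance properties, the identity reduces to Fubini and a change of variables. Starting from the product representation of $\mu_s$ on $E_N\times E_N^\perp$ and using $\mathbf{1}_{\Phi_N(t)(A)}(u)=\mathbf{1}_A(\Phi_N(-t)u)$, I would substitute $u=\Phi_N(t)(v)$. By the splitting above, $dL_N$ is preserved (Liouville for $\Phi_N^{\mathrm{low}}(t)$) and $d\mu^\perp_{s;N}$ is preserved (rotation invariance for $S(t)$). The cutoff obeys $\chi_r(\Phi_N(t)(v))=\chi_r(v)$ thanks to the conservation law of Lemma~\ref{boris}, and the Gaussian weight $e^{-\|\pi_N u\|^2_{H^{s+\gamma/2}}}$ transforms into $e^{-\|\pi_N\Phi_N(t)(v)\|^2_{H^{s+\gamma/2}}}$. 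Relabeling $v$ as $u$ gives the announced formula.

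The only non-formal ingredient is the Liouville property of the finite-dimensional flow $\Phi_N^{\mathrm{low}}(t)$; everything else is bookkeeping once that property, the rotation invariance of $\mu^\perp_{s;N}$ under $S(t)$, and the conservation law of Lemma~\ref{boris} are in hand.
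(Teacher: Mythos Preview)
Your proposal is correct and follows essentially the same approach as the paper: the same splitting $\Phi_N(t)=\tilde{\Phi}_N(t)\circ\pi_N+S(t)\circ(1-\pi_N)$, the same two invariance ingredients (Liouville for the finite-dimensional flow, rotation invariance of $\mu^\perp_{s;N}$ under $S(t)$), and the same use of Lemma~\ref{boris} to handle $\chi_r$. The only cosmetic difference is that the paper verifies the Liouville property by computing the divergence in the real coordinates $(a_n,b_n)$, whereas you also offer the Hamiltonian/symplectic viewpoint; both lead to the same conclusion.
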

\begin{proof}
We follow \cite{TV}.  A difference with \cite{TV} is that in Proposition~\ref{ch-var}, we deal with $\chi_{r}(u)$ and not $\chi_{r}(\pi_N u)$. As we will see below thanks to the conservation law of Lemma~\ref{boris} the analysis is not affected by the lack of the projector $\pi_N$.
Let us denote by $\tilde{\Phi}_N(t)$ the (well-defined) flow of the following ODE on $E_N$,
\begin{equation}\label{FD}
\partial_t u+\partial_t|D_x|^\gamma u+\partial_x  u+\partial_x\pi_{N}(u^2)=0,\quad u(0,x)\in E_N\,.
\end{equation}
Then, by definition, we have the following relation 
\begin{equation}\label{split}
\Phi_N(t)(u_0)=\tilde{\Phi}_{N}(t)(\pi_N u_0)+S(t)((1-\pi_{N})u_0)\,.
\end{equation}
We have the following lemma.
\begin{lemme}\label{liouvile}
The measure $ du_1...du_N $ is invariant under the flow $\tilde{\Phi}_N(t)$. 
\end{lemme}
\begin{proof}
If 
$$
u(x)=\sum_{0<|n|\leq N} u_n\, e^{inx},\quad u_n=\overline{u_{-n}}
$$
then the equation \eqref{FD} can be rewritten as 
$$
\partial_t u_n=-\frac{in}{1+|n|^\gamma}\Big(u_n+\sum_{\substack{n_1+n_2=n\\0<|n_1|,|n_2|\leq N}} u_{n_1}u_{n_2}\Big),\quad 1\leq n\leq N
$$
and thus if $u_n=a_n+ib_n$, we arrive at the equations
\begin{eqnarray*}
\partial_t a_n & = &\frac{n}{1+|n|^\gamma}\Big(b_n+\sum_{\substack{n_1+n_2=n\\0<|n_1|,|n_2|\leq N}} (a_{n_1}b_{n_2}+a_{n_2}b_{n_1})\Big),
\\
\partial_t b_n & = &-\frac{n}{1+|n|^\gamma}\Big(a_n+\sum_{\substack{n_1+n_2=n\\0<|n_1|,|n_2|\leq N}} (a_{n_1}a_{n_2}-b_{n_1}b_{n_2})\Big)\,.
\end{eqnarray*}
We now observe that if we write the last equations as
$$
\partial_{t}a_n=F_{n}(a_1,\dots,a_N,b_1\dots,b_N),\quad
\partial_{t}b_n=G_{n}(a_1,\dots,a_N,b_1\dots,b_N),
$$
then we have the remarkable property
$$
\frac{\partial F_n}{\partial a_n}=\frac{\partial G_n}{\partial b_n}=0,\quad 1\leq n\leq N\,.
$$
In particular, the ODE \eqref{FD} is generated by a divergence free vector field. 
Therefore the statement of Lemma~\ref{liouvile} follows from the Liouville theorem.
\end{proof}
We also have the following statement.
\begin{lemme}\label{rotation}
The measure $\mu_{s;N}^\perp$ on 
$E_N^\perp$ is invariant under the map $S(t)$. In particular $\mu_s$ is invariant under $S(t)$. 
\end{lemme} 
\begin{proof}
The proof of Lemma~\ref{rotation} follows from the invariance of the complex gaussian under rotations. 
We follow closely  \cite[Lemma~5.3]{TV}, where the proof of an analogous statement is given. 
For $M>N$, we denote by $E_N^M$ the finite dimensional real vector space spanned by $(\cos(nx),\sin(nx))$, where $N<n\leq M$.
We denote by $\mu_N^M$ the centered gaussian measure on $E_N^M$ induced by the series 
$$
\sum_{|n|=N+1}^M \frac{g_n(\omega)}{|n|^{s+\gamma/2}} e^{inx}\,.
$$
By using the Fatou lemma, we obtain that if $U$ is an open set of $E_N$ then we have
\begin{equation}\label{fatou}
\mu^\perp_N(U)\leq \liminf_{M\rightarrow\infty}\mu_N^M(U\cap E^M_N)\,.
\end{equation}
By passing to a complementary set in \eqref{fatou}, we get that for $F$ a closed set of $E_N$,
\begin{equation}\label{fatou-bis}
\mu^\perp_N(F)\geq \limsup_{M\rightarrow\infty}\mu_N^M(F\cap E^M_N)\,.
\end{equation}
By the definition of $S(t)$, we get
\begin{eqnarray*}
S(t)(\cos(nx)) &  =  & \cos\Big(-\frac{tn}{1+|n|^\gamma}+nx\Big),
\\
S(t)(\sin(nx))   & =  &  \sin\Big(-\frac{tn}{1+|n|^\gamma}+nx\Big).
\end{eqnarray*}
Therefore for fixed $t$, the map $S(t)$ acts as a rotation on $E_N^M$.
Consequently,  by the invariance of centered gaussians by rotations, we obtain that 
the measure $\mu_N^M$ is invariant under $S(t)$. Let $F$ be a closed set of $E_N^\perp$.  Then $S(t)(F)$ is also closed and thanks to \eqref{fatou-bis},
$$
\mu^\perp_N(S(t)(F)+\overline{B_{\varepsilon}})\geq \limsup_{M\rightarrow\infty}\mu_N^M((S(t)F+\overline{B_{\varepsilon}})\cap E^M_N),
$$
where $B_{\varepsilon}$ denotes the open ball of radius $\varepsilon$ in $E_N^\perp$ ($E_N^\perp$ is equipped with the $H^s$ topology). 
Using that $S(t)$ acts as an isometry on $H^s$ and the invariance of $E^M_N$ under $S(t)$,  we obtain that for every $\varepsilon$ and every $M$,
$$
S(t)\big((F+B_{\varepsilon})\cap E^M_N\big)\subset (S(t)F+\overline{B_{\varepsilon}})\cap E^M_N.
$$
Therefore using the invariance of $\mu_N^M$ under $S(t)$ and \eqref{fatou}, we get
\begin{eqnarray*}
\mu^\perp_N(S(t)(F)+\overline{B_{\varepsilon}}) &\geq & \limsup _{M\rightarrow\infty}\mu_N^M\big(S(t)\big((F+B_{\varepsilon})\cap E^M_N\big)\big)
\\
& = &
\limsup _{M\rightarrow\infty}\mu_N^M\big((F+B_{\varepsilon})\cap E^M_N\big)
\\
& \geq &
\mu_N^\perp(F+B_{\varepsilon})\geq \mu_N^\perp(F)\,.
\end{eqnarray*}
Letting $\varepsilon\rightarrow 0$ and using the Lebesgue theorem we get $\mu_N^\perp(F)\leq \mu_N^\perp(S(t)(F))$.
By the time reversibility of $S(t)$, we get $\mu_N^\perp(F)= \mu_N^\perp(S(t)(F))$ for every closed set $F$ of $E^N$.
Finally by approximation arguments, we obtain that
$\mu_N^\perp(A)= \mu_N^\perp(S(t)(A))$ for every Borel set $A$ of $E^N$. This completes the proof of Lemma~\ref{rotation}.
\end{proof}
Let us now complete the proof of Proposition~\ref{ch-var}. Again, we follow closely \cite{TV}.
Recall that $dL_N=du_1...du_N$.  We can write
\begin{equation}\label{expression}
\int_{\Phi_N(t)(A)} 
\chi_r(u)
e^{-\|\pi_N u\|_{H^{s+\frac{\gamma}{2}}}^2}
dL_N \,d\mu^\perp_{s;N}
\end{equation}
as
$$
\int_{E_N}\int_{E_N^\perp}\11(\Phi_N(t)(A))(u)\chi_r(u)
e^{-\|\pi_N u\|_{H^{s+\frac{\gamma}{2}}}^2}
dL_N \, d\mu^\perp_{s;N},
$$
where $\11$ denotes the indicator function of a measurable set.
Using the Fubini theorem, we obtain that \eqref{expression} can be written as
$$
\int_{E_N}
e^{-\|\pi_N u\|_{H^{s+\frac{\gamma}{2}}}^2}
\Big(
\int_{E_N^\perp}\11(\Phi_N(t)(A))(\pi_{N}(u)+\pi_{>N}(u))\chi_r(\pi_N(u)+\pi_{>N}(u))d\mu^\perp_{s;N}\Big)dL_N,
$$
where $\pi_{>N}={\rm Id}-\pi_N$. 
Thanks to Lemma~\ref{rotation}, we can write the last expression equals
$$
\int_{E_N}
e^{-\|\pi_N u\|_{H^{s+\frac{\gamma}{2}}}^2}
\Big(
\int_{E_N^\perp}\11(\Phi_N(t)(A))(\pi_{N}(u)+S(t)\pi_{>N}(u))\chi_r(\pi_N(u)+S(t)\pi_{>N}(u))d\mu^\perp_{s;N}\Big)dL_N.
$$
Using once again the Fubini theorem, we get that \eqref{expression} equals
$$
\int_{E_N^\perp}
\Big(
\int_{E_N}
e^{-\|\pi_N u\|_{H^{s+\frac{\gamma}{2}}}^2}
\11(\Phi_N(t)(A))(\pi_{N}(u)+S(t)\pi_{>N}(u))\chi_r(\pi_N(u)+S(t)\pi_{>N}(u))dL_N\Big)d\mu^\perp_{s;N}.
$$
Now, using Lemma~\ref{liouvile}, we obtain that the last expression is equal to
\begin{multline*}
\int_{E_N^\perp}
\Big(
\int_{E_N}
e^{-\|\tilde{\Phi}_N(t)(\pi_N u)\|_{H^{s+\frac{\gamma}{2}}}^2}
\11(\Phi_N(t)(A))(\tilde{\Phi}_N(t)(\pi_{N}u)+S(t)\pi_{>N}(u))
\\
\chi_r(\tilde{\Phi}_N(t)(\pi_Nu)+S(t)\pi_{>N}(u))dL_N\Big)d\mu^\perp_{s;N}.
\end{multline*}
Coming back to \eqref{split}, we observe that $\tilde{\Phi}_N(t)(\pi_N u)=\pi_{N}\Phi_{N}(t)(u)$ and therefore
\eqref{expression} equals
\begin{equation*}
\int_{E_N^\perp}
\Big(
\int_{E_N}
e^{-\|\pi_{N}\Phi_N(t)(u)\|_{H^{s+\frac{\gamma}{2}}}^2}
\11(\Phi_N(t)(A))(\Phi_N(t)(u))
\chi_r(\Phi_N(t)(u))dL_N\Big)d\mu^\perp_{s;N}.
\end{equation*}
Since $\Phi_N(t)$ is a bijection, we have that $\11(\Phi_N(t)(A))(\Phi_N(t)(u))=\11(A)(u)$.
Moreover, using Lemma~\ref{boris}, we obtain that $\chi_r(\Phi_N(t)(u))=\chi_{r}(u)$.
Therefore, we finally obtain that 
\eqref{expression} equals
\begin{equation*}
\int_{E_N^\perp}
\Big(
\int_{E_N}
e^{-\|\pi_{N}\Phi_N(t)(u)\|_{H^{s+\frac{\gamma}{2}}}^2}
\11(A)(u)
\chi_r(u)dL_N\Big)d\mu^\perp_{s;N}
\end{equation*}
which equals
$$
\int_{A} 
\chi_{r}(u) 
e^{-\|\pi_N(\Phi_N(t)(u))\|^2_{H^{s+\frac{\gamma}{2}}}} du_1...du_N\, d\mu^\perp_{s;N}\,.
$$
This completes the proof of Proposition~\ref{ch-var}.
\end{proof}
\section{An energy estimate}
The following energy estimate is of importance in the study of the transport of the measure $\mu_s$ by $\Phi(t)$.
\begin{proposition}\label{energy}
Let $\gamma\in(4/3,2]$ and $s\geq 1$. 
Then there exist $\kappa<2$, $\varepsilon>0$ and a constant $C$ such that for every $N$ and every solution $u$  of \eqref{BBM-gamma-N},
$$
\frac{d}{dt}\|\pi_{N}u(t)\|_{H^{s+\gamma/2}}^2\leq 
C\,\big(1+\|\pi_{N}u(t)\|_{H^{\gamma/2}}^{3-\kappa}\big)
\big(1+\||D_x|^{s+\frac{\gamma}{2}-\frac{1}{2}-\varepsilon}
\pi_Nu(t)\|^\kappa_{L^{\infty}}\big)
$$
\end{proposition}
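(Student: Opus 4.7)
The plan is to work with the projected variable $v := \pi_N u$, which satisfies the reduced equation $\partial_t v + \partial_t|D_x|^\gamma v + \partial_x v + \partial_x \pi_N(v^2) = 0$ obtained by applying $\pi_N$ to \eqref{BBM-gamma-N} (using $\pi_N^2=\pi_N$ and that $\pi_N$ commutes with $\partial_t$, $\partial_x$, $|D_x|^\gamma$). Differentiating the squared Sobolev norm and using the skew-symmetry of the Fourier multiplier $in/(1+|n|^\gamma)$ to eliminate the linear contribution, one obtains (up to multiplicative constants) the trilinear Fourier expression
\begin{equation*}
\Re\sum_{n_1+n_2+n_3=0}\frac{in_3|n_3|^{2s+\gamma}}{1+|n_3|^\gamma}\prod_{j=1}^3\hat v(n_j),
\end{equation*}
the sum being automatically restricted to $|n_j|\le N$ since $\hat v$ is supported there. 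Because $\prod_j\hat v(n_j)$ is invariant under permutations of $(n_1,n_2,n_3)$, the multiplier can be replaced by its full symmetrization $\Psi(n_1,n_2,n_3)=\frac{i}{3}\sum_{j=1}^3 n_j|n_j|^{2s+\gamma}/(1+|n_j|^\gamma)$.

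The heart of the matter is a pointwise cancellation bound on $\Psi$. Writing $n|n|^{2s+\gamma}/(1+|n|^\gamma)=F(n)-G(n)$ with $F(n)=n|n|^{2s}$ and $G(n)=n|n|^{2s}/(1+|n|^\gamma)$, and ordering $|n_1|\le|n_2|\le|n_3|$ (so the constraint $n_1+n_2+n_3=0$ forces $|n_2|\simeq|n_3|$), a mean value argument applied to $F(n_3)=F(-n_1-n_2)$, using $|F'(x)|\lesssim|x|^{2s}$, gives $|F(n_2)+F(n_3)|\lesssim|n_1||n_3|^{2s}$; combined with the trivial $|F(n_1)|\lesssim|n_1||n_3|^{2s}$, this yields $\bigl|\sum_j F(n_j)\bigr|\lesssim|n_{\min}||n_{\max}|^{2s}$ (writing $n_{\min}=n_1$, $n_{\max}=n_3$). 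The lower-order piece is directly bounded by $\bigl|\sum_j G(n_j)\bigr|\lesssim|n_{\max}|^{2s+1-\gamma}$. Hence $|\Psi|\lesssim|n_{\min}||n_{\max}|^{2s}+|n_{\max}|^{2s+1-\gamma}$.

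The final step is to estimate the resulting trilinear form via a Littlewood--Paley decomposition $v=\sum_k P_k v$. Restricting to dyadic triples $k_1\le k_2\le k_3$ (so $k_2\simeq k_3$), two complementary H\"older bounds are used on each block triple. The first, $L^1\cdot L^\infty\cdot L^\infty$, puts $P_{k_1}v$ in $H^{\gamma/2}\hookrightarrow L^1$ and distributes $|n_{\max}|^{2s}$ onto two $L^\infty$ bounds on $P_{k_2}v,P_{k_3}v$ via Bernstein shifts to $\||D_x|^{s+\gamma/2-1/2-\varepsilon}v\|_{L^\infty}$; for $\gamma>4/3+O(\varepsilon)$ the total exponent in $2^{k_3}$ is negative, and the dyadic sum converges to the ``$\kappa=2$'' bound $\|v\|_{H^{\gamma/2}}\||D_x|^{s+\gamma/2-1/2-\varepsilon}v\|_{L^\infty}^2$. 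The second, $L^2\cdot L^2\cdot L^\infty$, places two factors in $H^{\gamma/2}$ and one in $L^\infty$; its per-block exponent in $2^{k_3}$ is positive, so it is summable only up to a cutoff $K$, contributing $2^{cK}\|v\|_{H^{\gamma/2}}^2\||D_x|^{s+\gamma/2-1/2-\varepsilon}v\|_{L^\infty}$ with $c=c(s,\gamma,\varepsilon)>0$. Splitting the sum at $k_3=K$ and optimizing the trade-off yields
\begin{equation*}
\text{(trilinear form)}\ \lesssim\ \|v\|_{H^{\gamma/2}}^{3-\kappa}\,\||D_x|^{s+\gamma/2-1/2-\varepsilon}v\|_{L^\infty}^{\kappa},
\end{equation*}
with $\kappa=(2s+1-3\gamma/2)/(s-1/2-\varepsilon)$, which is strictly less than $2$ precisely when $\gamma>4/3+4\varepsilon/3$; for any $\gamma>4/3$, small enough $\varepsilon$ delivers the stated estimate. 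The remainder $|n_{\max}|^{2s+1-\gamma}$ is handled analogously with extra $k_3$-slack.

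The main obstacle is achieving the strict inequality $\kappa<2$: a naive $L^1\cdot L^\infty\cdot L^\infty$ estimate alone only gives $\kappa=2$, and only the combination of the symmetrization cancellation (without which the multiplier would behave like $|n_{\max}|^{2s+1}$ and force $\gamma>2$ rather than $\gamma>4/3$) with the Littlewood--Paley cutoff interpolation between the two H\"older bounds yields the strict improvement needed for the exponential Gaussian integrability arguments in the later sections.
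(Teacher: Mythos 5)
Your proposal is correct in its overall architecture and in its numerology, but it follows a genuinely different route from the paper. The paper stays in physical space: after the same splitting ${\rm Id}-(1+|D_x|^{\gamma})^{-1}$ that you encode as $F-G$, it applies the exact Leibniz rule to $\partial_x^s(v\,\partial_x v)$ (here $s$ is an integer), performs the single quasilinear integration by parts $\int(\partial_x^sv)(\partial_x^{s+1}v)v=-\frac12\int\partial_xv\,(\partial_x^sv)^2$ --- the physical-space avatar of your symmetrization cancellation $|\Psi|\lesssim|n_{\min}||n_{\max}|^{2s}$ --- and then estimates each surviving term $\int(\partial_x^sv)(\partial_x^{\sigma_1}v)(\partial_x^{\sigma_2}v)$ by H\"older with three \emph{finite} exponents $p_j=2/\theta_j$ combined with the Littlewood--Paley interpolation bound $\|\partial_x^\sigma v\|_{L^p}\lesssim\|v\|_{H^{\gamma/2}}^{\theta}\||D_x|^{s+\frac{\gamma}{2}-\frac12-\varepsilon}v\|_{L^\infty}^{1-\theta}$ of Lemma~\ref{LP}. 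The constraints $\theta_1+\theta_2+\theta_3\le 2$ (H\"older) and $\theta_1+\theta_2+\theta_3>1$ (so that $\kappa=3-\theta_1-\theta_2-\theta_3<2$) reproduce exactly your threshold $\gamma>4/3$ and your value $\kappa=(2s+1-\frac{3\gamma}{2})/(s-\frac12-\varepsilon)$ up to the auxiliary small parameters, so the two computations agree. The paper must treat $s=1$ separately (only $\int(\partial_xv)^3$ survives) because its H\"older constraint $s+\frac32>\frac{3\gamma}{2}$ can fail there; in your scheme that is the harmless case in which the $L^2\cdot L^2\cdot L^\infty$ block exponent is already nonpositive.

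The one step you should not leave as stated is the passage from the pointwise bound on the symmetrized symbol $\Psi$ to the block-wise H\"older estimates. A trilinear form $\sum_{n_1+n_2+n_3=0}\Psi\,\prod_j\hat v(n_j)$ is not controlled by $\sup|\Psi|\cdot\|P_{k_1}v\|_{L^1}\|P_{k_2}v\|_{L^\infty}\|P_{k_3}v\|_{L^\infty}$ in general: either you take absolute values of the Fourier coefficients, after which the $L^\infty$ norms appearing in H\"older are no longer those of $P_{k_j}v$ (passing through $\ell^1$ of the coefficients costs up to half a derivative and would ruin the numerology), or you keep the oscillation, in which case you need the symbol to factor on each block, or at least to satisfy Coifman--Meyer/Mikhlin-type derivative bounds there, uniformly in the block. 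Your mean-value representation $F(n_2)-F(n_1+n_2)=-n_1\int_0^1F'(n_2+tn_1)\,dt$ yields the sup bound but mixes the variables, so the factorization is not immediate. This is repairable --- the symmetrized symbol does have the requisite smoothness, or one can simply revert to the physical-space Leibniz identity as the paper does, which turns every term into an honest product of three functions --- but as written it is the missing ingredient. Everything else, namely the cancellation, the observation that a single H\"older bound only yields $\kappa=2$, and the two-bound dyadic interpolation giving $\kappa<2$ precisely when $\gamma>4/3$, matches the paper's conclusions.
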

\begin{proof}
We first observe that $\pi_N u$ is a solution of 
\begin{equation*}
\partial_t \pi_N u+\partial_t|D_x|^\gamma \pi_N u+\partial_x \pi_N u+\partial_x\pi_{N}((\pi_{N}u)^2)=0\,.
\end{equation*}
Therefore, we can compute
\begin{multline*}
\frac{d}{dt}\|\pi_N u(t)\|_{H^{s+\gamma/2}}^2
=
\\
\frac{1}{2\pi}\int (|D_x|^{s+\frac{\gamma}{2}}\pi_N u)\,
 |D_x|^{s+\frac{\gamma}{2}}
 \big(
 -(1+|D_x|^{\gamma})^{-1}\partial_x \pi_N u
 -  (1+|D_x|^{\gamma})^{-1} \partial_x\pi_{N}(\pi_{N}u)^2
 \big).
\end{multline*}
Since 
$$
\int (|D_x|^{s+\frac{\gamma}{2}}\pi_N u)\,( |D_x|^{s+\frac{\gamma}{2}}(1+|D_x|^{\gamma})^{-1}\partial_x \pi_N u)=0
$$
and using that $\pi_N$ is a projector, we obtain that
\begin{eqnarray*}
\frac{d}{dt}\|\pi_N u(t)\|_{H^{s+\gamma/2}}^2
&= &
-\frac{1}{2\pi}\int (|D_x|^{s+\frac{\gamma}{2}}\pi_N u)\,
 |D_x|^{s+\frac{\gamma}{2}}
 \big( (1+|D_x|^{\gamma})^{-1} \partial_x(\pi_{N}u)^2
 \big)
 \\& = &
 -\frac{1}{2\pi}\int ((1+|D_x|^{\gamma})^{-1} |D_x|^{s+\gamma}\pi_N u)\,
 |D_x|^{s}
 \big(\partial_x(\pi_{N}u)^2 \big)\,.
\end{eqnarray*}
By writing 
$$
(1+|D_x|^{\gamma})^{-1} |D_x|^{\gamma}={\rm Id}-(1+|D_x|^{\gamma})^{-1}
$$
we arrive at
$$
\frac{d}{dt}\|\pi_N u(t)\|_{H^{s+\gamma/2}}^2=I_1+I_2,
$$
where 
$$
I_1=-\frac{1}{2\pi}\int (|D_x|^{s}\pi_N u)\,|D_x|^{s}
 \big(\partial_x(\pi_{N}u)^2 \big)
=
-\frac{1}{2\pi}\int (\partial_x^{s}\pi_N u)\,\partial_x^{s}
 \big(\partial_x(\pi_{N}u)^2 \big)
$$
and
$$
I_2=\frac{1}{2\pi}\int ((1+|D_x|^{\gamma})^{-1} |D_x|^{s}\pi_N u)\,
 |D_x|^{s}
 \big(\partial_x(\pi_{N}u)^2 \big)\,.
$$
Let us first estimate the more regular contribution of $I_2$.
Using that $\gamma> 1$ and $s>1/2$, we obtain that
$$
I_2\lesssim \|\pi_N u\|_{H^s}\|(\pi_N u)^2\|_{H^s}\lesssim \|\pi_N u\|_{H^s}^2\|\pi_N u\|_{L^\infty}
\lesssim \|\pi_N u\|_{H^s}^2\|\pi_N u\|_{H^{\gamma/2}}\,.
$$
Thanks to a suitable use of the H\"older inequality, we obtain that for some $\theta>0$
$$
\|\pi_N u\|_{H^s}\leq \|\pi_N u\|^\theta_{H^{\gamma/2}}
\|\pi_N u\|^{1-\theta}_{H^{s+\frac{\gamma}{2}-\frac{1}{2}-\varepsilon}},
$$
provided $\varepsilon$ is small enough. Now since our spatial domain is compact, we have that
$$
\|\pi_N u\|_{H^{s+\frac{\gamma}{2}-\frac{1}{2}-\varepsilon}}
\lesssim
\||D_x|^{s+\frac{\gamma}{2}-\frac{1}{2}-\varepsilon}\pi_N u\|_{L^\infty}\,.
$$
Therefore we arrive at 
$$
I_2\lesssim 
\|\pi_{N}u\|_{H^{\gamma/2}}^{1+2\theta}
\||D_x|^{s+\frac{\gamma}{2}-\frac{1}{2}-\varepsilon}
\pi_Nu\|^{2-2\theta}_{L^{\infty}}
$$
which is an acceptable bound. 

Let us next turn to the more delicate analysis of $I_1$. We can write
\begin{equation*}
I_1=-\frac{1}{\pi}\int \partial_x^sv \,\partial_x^s(\partial_x v v), 
\end{equation*}
where for shortness, we set $v=\pi_Nu$.
When applying the Leibniz rule, the most delicate term which appears is the one when all $s$ derivatives hit on $\partial_x v$.
Namely, we have to deal with the term 
\begin{equation}\label{EE}
\int (\partial_x^sv) \,(\partial_x^{s+1}v)\, v\,.
\end{equation}
In the spirit of the local well-posedness theory of quasilinear hyperbolic PDE's, the main observation is that one may rewrite \eqref{EE} as
$$
-\frac{1}{2}\int \partial_xv (\partial_x^s v)^2\,.
$$
Therefore, thanks to the last key argument and the Leibniz rule, we obtain that in order to estimate $I_1$, it suffices to estimate the expressions 
\begin{equation}\label{M1}
\int (\partial_x^s v)(\partial_x^{\sigma_1}v)(\partial_x^{\sigma_2}v),
\end{equation}
where 
$$
\sigma_1+\sigma_2=s+1,\quad \sigma_1\leq s,\,\, \sigma_2\leq s,
$$
(the important point being that $\sigma_1$ and $\sigma_2$ are not allowed to be $s+1$).
For that purpose, we will use the following lemma.
\begin{lemme}\label{LP}
Let $\sigma\in [\frac{\gamma}{2},s+\frac{\gamma}{2}-\frac{1}{2}-\varepsilon]$. 
Suppose that $\theta\in[0,1]$ is such that
\begin{equation}\label{asss}
\sigma<\theta\frac{\gamma}{2}+(1-\theta)(s+\frac{\gamma}{2}-\frac{1}{2}-\varepsilon)
\end{equation}
Then for $u$ such that $\hat{u}(0)=0$, we have the bound
$$
\|\partial_x^\sigma u\|_{L^p}\lesssim
\|u\|^\theta_{H^{\gamma/2}}
\|| D_x|^{s+\frac{\gamma}{2}-\frac{1}{2}-\varepsilon}u\|^{1-\theta}_{L^\infty}\,,
$$
provided $\varepsilon>0$ is sufficiently small, where 
$
\frac{1}{p}=\frac{\theta}{2}+\frac{1-\theta}{\infty}$,  i.e. $p=\frac{2}{\theta}$.
\end{lemme}
\begin{proof}
Consider a Littlewood-Paley decomposition of the unity
\begin{equation}\label{lp1}
{\rm Id}=\sum_{\lambda}\Delta_{\lambda},
\end{equation}
where the summation is taken over the dyadic values of $\lambda$, i.e. $\lambda=2^j$, $j=0,1,2,\dots$ and $\Delta_{\lambda}$ are 
Littlewood-Paley projectors. More precisely they are defined as Fourier multipliers as $\Delta_0=\psi_0(|D_x|)$ and for $\lambda\geq 1$,
$\Delta_\lambda=\psi(|D_x|/\lambda)$, where $\psi_0\in C_0^\infty(-\frac{1}{2},\frac{1}{2})$ and $\psi\in C_0^\infty(\R\backslash\{0\})$ are suitable functions such that
\eqref{lp1} holds. 
In the sequel, we shall use that for every $\varphi \in C_0^\infty(\R\backslash\{0\})$ one has the bound
\begin{equation}\label{lp2}
\|\varphi(|D_x|/\lambda)(f)\|_{L^p}\leq C\|f\|_{L^p},\quad p\in [1,\infty],\,\, \lambda\geq 1,
\end{equation}
where the constant $C$ is {\it independent} of $\lambda$.  The bound \eqref{lp2} is a consequence of the Schur lemma.
Indeed, one needs to invoke the following estimate for the kernel of $\varphi(|D_x|/\lambda)$,
$$
\Big|
\sum_{n}\varphi\Big(\frac{|n|}{\lambda}\Big)e^{in(x-y)}
\Big|\leq \frac{C\lambda}{(1+\lambda|x-y|)^2}
$$
which follows after two summations by parts. We notice that the extension of \eqref{lp2} when the circle is replaced by a compact Riemannian manifold is known 
to hold (see e.g. \cite{BGT}).

For $u$ such that $\hat{u}(0)=0$, we have  $\Delta_0(u)=0$ and therefore 
\begin{equation}\label{lp3}
\|\partial_x^\sigma u\|_{L^p}\leq \sum_{\lambda\geq 1}\|\partial_x^\sigma \Delta_{\lambda}u\|_{L^p}\,.
\end{equation}
Similarly to \eqref{lp2}, using the Schur lemma, we can write
\begin{equation}\label{lp4}
 \|\partial_x^\sigma \Delta_{\lambda}u\|_{L^p}\lesssim \lambda^\sigma \|\Delta_{\lambda}u\|_{L^p}.
\end{equation}
Using \eqref{lp3}, \eqref{lp4} and the H\"older inequality, we arrive at the bound
\begin{equation}\label{lp5}
\|\partial_x^\sigma u\|_{L^p}\lesssim \sum_{\lambda\geq 1}
\lambda^\sigma
\|\Delta_{\lambda}u\|_{L^2}^\theta
\|\Delta_{\lambda} u\|_{L^\infty}^{1-\theta}\,,
\end{equation}
where $p=2/\theta$. 
Now we can write
$$
\lambda^{s+\frac{\gamma}{2}-\frac{1}{2}-\varepsilon}\Delta_{\lambda}=
\tilde{\psi}(|D_x|/\lambda)|D_x|^{s+\frac{\gamma}{2}-\frac{1}{2}-\varepsilon},
$$
where 
$\tilde{\psi}\in C_0^\infty(\R\backslash\{0\})$ is chosen such that
for $x\geq 0$, $\tilde{\psi}(x)=\psi(x)/x^{s+\frac{\gamma}{2}-\frac{1}{2}-\varepsilon}$.
Therefore, using \eqref{lp2}, we get
$$ 
\|\Delta_{\lambda} u\|_{L^\infty}
\lesssim 
\lambda^{-(s+\frac{\gamma}{2}-\frac{1}{2}-\varepsilon)}
\|| D_x|^{s+\frac{\gamma}{2}-\frac{1}{2}-\varepsilon}u\|_{L^\infty}\,.
$$
Similarly, one obtains 
$$
\|\Delta_{\lambda}u\|_{L^2}
\lesssim
\lambda^{-\frac{\gamma}{2}}
\| |D_x|^{\frac{\gamma}{2}}u\|_{L^2}\,.
$$
Therefore, coming back to \eqref{lp5}, we get the bound
$$
\|\partial_x^\sigma u\|_{L^p}
\lesssim
\|| D_x|^{\frac{\gamma}{2}}u\|^\theta_{L^2}
\|| D_x|^{s+\frac{\gamma}{2}-\frac{1}{2}-\varepsilon}u\|^{1-\theta}_{L^\infty}
\Big(
\sum_{\lambda\geq 1}
\lambda^{\sigma-\theta\frac{\gamma}{2}-(1-\theta)(s+\frac{\gamma}{2}-\frac{1}{2}-\varepsilon)}
\Big)\,.
$$
Thanks to \eqref{asss} the sum appearing in the right hand-side of the last inequality is convergent. Thus we arrive at the needed bound.
This completes the proof of Lemma~\ref{LP}.
\end{proof}
Let us estimate the expressions \eqref{M1}. Suppose first that $s\geq 2$.
For $\varepsilon_1$ and $\varepsilon$ sufficiently small to be fixed, we define the numbers $\theta_1,\theta_2,\theta_3$ in the interval $(0,1)$ as
\begin{eqnarray}\label{smetki}
s+\varepsilon_1& = &\theta_1\frac{\gamma}{2}+(1-\theta_1)(s+\frac{\gamma}{2}-\frac{1}{2}-\varepsilon)
\\
\label{smetki2}
\sigma_1+\varepsilon_1& = &\theta_2\frac{\gamma}{2}+(1-\theta_2)(s+\frac{\gamma}{2}-\frac{1}{2}-\varepsilon)
\\\label{smetki3}
\sigma_2+\varepsilon_1& = &\theta_3\frac{\gamma}{2}+(1-\theta_3)(s+\frac{\gamma}{2}-\frac{1}{2}-\varepsilon)
\end{eqnarray}
(observe that $s,\sigma_1,\sigma_2\in [1,s]$ and $1/2\leq \gamma/2\leq 1$).
We next define $p_1$, $p_2$, $p_3$ as $p_j=2/\theta_j$, $j=1,2,3$. We now check that under our assumptions of $\gamma$ and $s$ for 
$\varepsilon_1$ and $\varepsilon$ sufficiently small, we have
\begin{equation}\label{holder}
\frac{1}{p_1}+\frac{1}{p_2}+\frac{1}{p_3}\leq 1.
\end{equation}
Clearly \eqref{holder} is equivalent to
$\theta_1+\theta_2+\theta_3\leq2$.
But coming back to \eqref{smetki}, \eqref{smetki2}, \eqref{smetki3} we obtain that
$$
\theta_1+\theta_2+\theta_3=\frac{s+\frac{3\gamma}{2}-\frac{5}{2}-3\varepsilon_1-3\varepsilon}{s-\frac{1}{2}-\varepsilon}\,.
$$
Therefore for  $\varepsilon_1$ and $\varepsilon$ sufficiently small the condition \eqref{holder} follows from $s+\frac{3}{2}>\frac{3\gamma}{2}$, which is satisfied thanks 
to the assumption $s\geq 2$.

Thanks to \eqref{holder}, we can apply the H\"older inequality  and Lemma~\ref{LP} to write
\begin{eqnarray*}
\Big|\int (\partial_x^s v)(\partial_x^{\sigma_1}v)(\partial_x^{\sigma_2}v)\Big|
&
\leq &
\|\partial_x^s v\|_{L^{p_1}}
\|\partial_x^{\sigma_1}v\|_{L^{p_2}}
\|\partial_x^{\sigma_2}v\|_{L^{p_3}}
\\
& \lesssim & 
\|v\|_{H^{\gamma/2}}^{\theta_1+\theta_2+\theta_3}
\|
|D_x|^{s+\frac{\gamma}{2}-\frac{1}{2}-\varepsilon}
v\|^{3-\theta_1-\theta_2-\theta_3}_{L^{\infty}}\,.
\end{eqnarray*}
Therefore it remains to verify that $\theta_1+\theta_2+\theta_3>1$.
But  for  $\varepsilon_1$ and $\varepsilon$ sufficiently small this condition follows from our assumption $\gamma>4/3$.

Let us finally consider the case $s=1$ which is not covered by the above analysis. In this case, we only need to estimate
$
\int (\partial_x v)^3.
$
Therefore in the case $s=1$ one gets an acceptable bound for $I_1$ thanks to the following lemma.
\begin{lemme}\label{LP-pak}
For every $\gamma>4/3$ there is $\theta>1/3$  such that for $u$ satisfying $\hat{u}(0)=0$, we have the bound
$$
\|\partial_x u\|_{L^3}\lesssim
\|u\|^\theta_{H^{\gamma/2}}
\|| D_x|^{\frac{1}{2}+\frac{\gamma}{2}-\varepsilon}u\|^{1-\theta}_{L^\infty}\,,
$$
provided $\varepsilon>0$ is sufficiently small.
\end{lemme}
\begin{proof}
As in the proof of Lemma~\ref{LP}, we perform a Littlewood-Paley decomposition. 
We can write
\begin{equation*}
\|\partial_x u\|_{L^3}\lesssim \sum_{\lambda\geq 1}
\lambda
\|\Delta_{\lambda}u\|_{L^2}^{\frac{2}{3}}
\|\Delta_{\lambda} u\|_{L^\infty}^{\frac{1}{3}}\,.
\end{equation*}
We now choose $\sigma$ such that
\begin{equation}\label{B1}
\frac{2}{3}\sigma+\frac{1}{3}\big(\frac{1}{2}+\frac{\gamma}{2}-\varepsilon\big)>1\,.
\end{equation}
Thanks to \eqref{B1} as in the proof of Lemma~\ref{LP}, we arrive at the bound
\begin{equation}\label{bea}
\|\partial_x u\|_{L^3}\lesssim
\|u\|^{\frac{2}{3}}_{H^{\sigma}}
\|| D_x|^{\frac{1}{2}+\frac{\gamma}{2}-\varepsilon}u\|^{\frac{1}{3}}_{L^\infty}\,,
\end{equation}
We choose more precisely $\sigma$ such that
$$
\frac{2}{3}\sigma+\frac{1}{3}\big(\frac{1}{2}+\frac{\gamma}{2}-\varepsilon\big)=1+\varepsilon
$$
which leads to
$$
\sigma=\frac{5-\gamma+8\varepsilon}{4}\,.
$$
We first observe that as far as $\gamma>1$ for $\varepsilon$ small enough $\sigma<\frac{1}{2}+\frac{\gamma}{2}-\varepsilon$.
If $\sigma \leq \gamma/2$ then the bound \eqref{bea} is already sufficient to complete the proof of Lemma~\ref{LP-pak} (with $\theta=\frac{2}{3}$).
We can therefore suppose that $\sigma\in [\frac{\gamma}{2}, \frac{1}{2}+\frac{\gamma}{2}-\varepsilon]$.
Now, thanks to a suitable use of the H\"older inequality, we can write
$$
\|u\|_{H^{\sigma}}\leq
\|u\|^{\alpha}_{H^{\gamma/2}}
\|u\|^{1-\alpha}_{H^{\frac{1}{2}+\frac{\gamma}{2}-\varepsilon}}
\lesssim
\|u\|^{\alpha}_{H^{\gamma/2}}
\||D_x|^{\frac{1}{2}+\frac{\gamma}{2}-\varepsilon}u\|^{1-\alpha}_{L^\infty}\,,
$$
where
$$
\sigma=\alpha\frac{\gamma}{2}+(1-\alpha)\big(\frac{1}{2}+\frac{\gamma}{2}-\varepsilon\big)\,.
$$
Now the claim of the lemma follows if we can assure that 
$
\frac{2}{3}\alpha>\frac{1}{3}.
$
A direct computation shows that the last inequality is equivalent to
$\gamma>\frac{4}{3}+\frac{10\varepsilon}{3}$ which can be assured for $\varepsilon$ small enough, thanks to our assumption $\gamma>\frac{4}{3}$.
This completes the proof of Lemma~\ref{LP-pak}
\end{proof}
Summarizing the previous discussion provides the needed bound for $I_1$.
This completes the proof of Proposition~\ref{energy}.
\end{proof}
\begin{remarque}
Let us observe that the for $\gamma>2$ one may obtain the analogue of Proposition~\ref{energy} by using an argument which does not require the integration by parts 
trick on
the quantity \eqref{EE}. Indeed for $\gamma>2$, the expression 
$$
\int ((1+|D_x|^{\gamma})^{-1} |D_x|^{s+\gamma}\pi_N u)\,
 |D_x|^{s}
 \big(\partial_x(\pi_{N}u)^2 \big)
 $$
 has enough smoothing so that we can employ a semi-linear technique to achieve the desired bound. 
 Interestingly, $\gamma=2$ is also the border  line of the applicability of the result of \cite{Ra}.
\end{remarque}
\section{A large deviation bound}
We can now invoke the following large deviation estimate for the quantity appearing in the energy estimate.
\begin{lemme}\label{LD}
Let $\varepsilon>0$. There exists $C$ such that for every $r>0$, every $p\geq 2$, every $N\geq 1$,
$$
\Big\|
\|| D_x|^{s+\frac{\gamma}{2}-\frac{1}{2}-\varepsilon}\pi_N u\|_{L^{\infty}}
\Big\|_{L^{p}(\mu_{s,r}(u))}
\leq 
Cp^{\frac{1}{2}}\,.
$$
\end{lemme}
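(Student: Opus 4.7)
\textbf{Proof plan for Lemma \ref{LD}.}

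Since $0\le\chi_r\le 1$, the density $d\mu_{s,r}/d\mu_s$ is bounded by $1$, so it suffices to prove the same inequality with $\mu_s$ in place of $\mu_{s,r}$. Setting $X_N(x):=|D_x|^{s+\frac{\gamma}{2}-\frac{1}{2}-\varepsilon}\pi_N u(x)$ and inserting the random Fourier series representation of $u$ under $\mu_s$, one has the explicit formula
\begin{equation*}
X_N(\omega,x)=\sum_{0<|n|\le N}\frac{g_n(\omega)}{|n|^{\frac{1}{2}+\varepsilon}}\,e^{inx}.
\end{equation*}
For each fixed $x$, this is a real centered Gaussian random variable, and its variance is bounded by $2\sum_{n\ge 1} n^{-1-2\varepsilon}\le C_\varepsilon$ uniformly in $N$ and $x$. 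More generally, for any $\sigma\in[0,\varepsilon)$, $|D_x|^\sigma X_N(x)$ is a centered Gaussian with variance bounded by $2\sum_{n\ge 1}n^{2\sigma-1-2\varepsilon}\le C_{\varepsilon,\sigma}$, uniformly in $N$ and $x$.

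The plan is then to use a Sobolev embedding to reduce the $L^\infty_x$ norm to an $L^q_x$ norm, which commutes with the $L^q(\mu_s)$ norm via Fubini. Fix $\sigma\in(0,\varepsilon)$ and a large integer $q_0>1/\sigma$, so that the fractional Sobolev embedding $W^{\sigma,q_0}(\mathbb{T})\hookrightarrow L^\infty(\mathbb{T})$ holds. For $p\ge q_0$ take $q:=p$; for $2\le p<q_0$ take $q:=q_0$. In either case $q\ge p$ and $\sigma q>1$, hence
\begin{equation*}
\|X_N\|_{L^\infty_x}\le C\bigl(\|X_N\|_{L^q_x}+\||D_x|^\sigma X_N\|_{L^q_x}\bigr).
\end{equation*}
Because $\mu_s$ is a probability measure, $\|\cdot\|_{L^p(\mu_s)}\le\|\cdot\|_{L^q(\mu_s)}$, and Minkowski (Fubini for the $L^q$ integrals) gives
\begin{equation*}
\Bigl\|\,\|X_N\|_{L^\infty_x}\Bigr\|_{L^p(\mu_s)}\le C\Bigl(\|\,\|X_N(\cdot)\|_{L^q(\mu_s)}\|_{L^q_x}+\|\,\||D_x|^\sigma X_N(\cdot)\|_{L^q(\mu_s)}\|_{L^q_x}\Bigr).
\end{equation*}
Applying the standard Gaussian moment bound $\|Y\|_{L^q}\le C\sqrt{q}\,\|Y\|_{L^2}$ pointwise in $x$ to the two centered Gaussians above, and using the uniform variance bounds from the previous paragraph, one obtains $\|X_N(\cdot)\|_{L^q(\mu_s)},\,\||D_x|^\sigma X_N(\cdot)\|_{L^q(\mu_s)}\le C\sqrt{q}$ uniformly in $x$ and $N$. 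Integrating in $x$ on the compact torus yields $\|\,\|X_N\|_{L^\infty_x}\|_{L^p(\mu_s)}\le C\sqrt{q}$, which equals $C\sqrt{p}$ when $p\ge q_0$ and is at most $C\sqrt{q_0}\le (C\sqrt{q_0}/\sqrt{2})\sqrt{p}$ when $2\le p<q_0$.

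There is no real obstacle here: everything reduces to the observation that the loss of derivatives in passing from $H^{s+\gamma/2}$ regularity down to $|D_x|^{s+\gamma/2-1/2-\varepsilon}$ leaves a room of size $\varepsilon$, which is enough both to apply a 1D Sobolev embedding $W^{\sigma,q}\hookrightarrow L^\infty$ with $\sigma q>1$ (take $\sigma<\varepsilon$ and $q$ large) and to keep the Gaussian variance of each Fourier coefficient summable. The $\sqrt{p}$ dependence is then automatic from the Gaussianity of each $g_n$.
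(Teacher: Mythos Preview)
Your proof is correct and follows essentially the same route as the paper: reduce from $\mu_{s,r}$ to $\mu_s$ via $\chi_r\le 1$, write out the explicit random trigonometric series with coefficients $g_n/|n|^{1/2+\varepsilon}$, use a one-dimensional Sobolev embedding $W^{\sigma,q}\hookrightarrow L^\infty$ (trading a small amount of the $\varepsilon$-room for an $L^q$ norm), then swap the $x$ and $\omega$ integrations and invoke the Gaussian moment bound $\|Y\|_{L^q}\lesssim\sqrt{q}\,\|Y\|_{L^2}$. The paper compresses the last two steps by citing \cite[Lemma~3.1]{BT1/2}, whereas you spell them out and handle the small-$p$ range explicitly, but the argument is the same.
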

\begin{proof}
We first observe that
$$
\Big\|
\|| D_x|^{s+\frac{\gamma}{2}-\frac{1}{2}-\varepsilon}\pi_N u\|_{L^{\infty}}
\Big\|_{L^{p}(\mu_{s,r}(u))}
\leq 
\Big\|
\|| D_x|^{s+\frac{\gamma}{2}-\frac{1}{2}-\varepsilon}\pi_N u\|_{L^{\infty}}
\Big\|_{L^{p}(\mu_{s}(u))}\,.
$$
Therefore, we need to prove that
$$
\Big\|
\|
|D_x|^{s+\frac{\gamma}{2}-\frac{1}{2}-\varepsilon}\pi_N u\|_{L^{\infty}}
\Big\|_{L^{p}(\mu_{s}(u))}
\leq 
Cp^{\frac{1}{2}}\,.
$$
Coming back to the definition of $\mu_s$, the last inequality can be rewritten as
$$
\Big\|
\|
|D_x|^{s+\frac{\gamma}{2}-\frac{1}{2}-\varepsilon}
\sum_{n\neq 0, |n|\leq N}\frac{g_n(\omega)}{|n|^{s+\gamma/2}}e^{inx}\|_{L^\infty}
\Big\|_{L^{p}_\omega}
\leq 
Cp^{\frac{1}{2}}\,.
$$
which would follow from
$$
\Big\|\|\sum_{n\neq 0, |n|\leq N}\frac{g_n(\omega)}{|n|^{\frac{1}{2}+\varepsilon}}e^{inx}\|_{L^{\infty}}
\Big\|_{L^{p}_\omega}
\leq 
Cp^{\frac{1}{2}}\,.
$$
Now, using the Sobolev embedding, we obtain that the last inequality follows from
\begin{equation}\label{last}
\Big\|\|\sum_{n\neq 0, |n|\leq N}\frac{g_n(\omega)}{|n|^{\frac{1}{2}+\frac{\varepsilon}{2}}}e^{inx}\|_{L^{q}}
\Big\|_{L^{p}_\omega}
\leq 
Cp^{\frac{1}{2}}\,,
\end{equation}
provided $q>\frac{2}{\varepsilon}$.
The inequality \eqref{last} is classical (see e.g. \cite[Lemma~3.1]{BT1/2}). 
This completes the proof of Lemma~\ref{LD}.
\end{proof}
\section{The measure evolution property}
\begin{lemme}\label{dve}
There is  $0\leq \beta<1$ such that for every $r>0$ there is a constant $C>0$ such that for every $p\geq 2$ and every Borel set $A$ of $H^s$, every $N\geq 1$,
$$
\frac{d}{dt}\mu_{s,r}(\Phi_N(t)(A))\leq  Cp^\beta(\mu_{s,r}(\Phi_N(t)(A)))^{1-\frac{1}{p}} \,.
$$
\end{lemme}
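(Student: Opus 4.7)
My plan is to express the measure of the transported set via the change-of-variables formula of Proposition~\ref{ch-var}, differentiate in time (the time derivative hits only the exponential weight), bound the resulting ``energy derivative'' by Proposition~\ref{energy}, and then reverse the change of variables and apply H\"older plus the large deviation bound from Lemma~\ref{LD}.

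Concretely, starting from
$$
\mu_{s,r}(\Phi_N(t)(A))=\gamma_N\int_A\chi_r(u)\,e^{-\|\pi_N\Phi_N(t)u\|^2_{H^{s+\gamma/2}}}\,dL_N\,d\mu^\perp_{s;N},
$$
and using that $\pi_N\Phi_N(t)u=\tilde\Phi_N(t)(\pi_Nu)$ depends smoothly on $t$ on the finite-dimensional subspace $E_N$, I can differentiate under the integral to obtain
$$
\frac{d}{dt}\mu_{s,r}(\Phi_N(t)(A))\leq \gamma_N\int_A \chi_r(u)\,e^{-\|\pi_N\Phi_N(t)u\|^2_{H^{s+\gamma/2}}}\left|\frac{d}{dt}\|\pi_N\Phi_N(t)u\|^2_{H^{s+\gamma/2}}\right|dL_N\,d\mu^\perp_{s;N}.
$$
By Proposition~\ref{energy} applied to $\Phi_N(t)u$, the derivative in the integrand is bounded by $C(1+\|\pi_N\Phi_N(t)u\|^{3-\kappa}_{H^{\gamma/2}})(1+\||D_x|^{s+\gamma/2-1/2-\varepsilon}\pi_N\Phi_N(t)u\|^\kappa_{L^\infty})$. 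The key point is that the presence of $\chi_r(u)$, combined with the conservation law of Lemma~\ref{boris} and the contractivity of $\pi_N$ on $H^{\gamma/2}$, forces
$$
\|\pi_N\Phi_N(t)u\|^2_{H^{\gamma/2}}\leq \|\Phi_N(t)u\|^2_{H^{\gamma/2}}\leq \tfrac{1}{4\pi}\bigl(\|u\|_{L^2}^2+4\pi\|u\|^2_{H^{\gamma/2}}\bigr)\leq \tfrac{r}{4\pi},
$$
so the first factor is absorbed into a constant $C_r$ depending only on $r$.

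Next, I reverse the change of variables formula. Applying Proposition~\ref{ch-var} with test function $G(v)=1+\||D_x|^{s+\gamma/2-1/2-\varepsilon}\pi_Nv\|^\kappa_{L^\infty}$, and using $\chi_r(\Phi_N(t)u)=\chi_r(u)$ from Lemma~\ref{boris}, I obtain
$$
\frac{d}{dt}\mu_{s,r}(\Phi_N(t)(A))\leq C_r\int_{\Phi_N(t)(A)}\bigl(1+\||D_x|^{s+\gamma/2-1/2-\varepsilon}\pi_N v\|^\kappa_{L^\infty}\bigr)d\mu_{s,r}(v).
$$
H\"older's inequality with exponents $p$ and $p/(p-1)$ gives
$$
\int_{\Phi_N(t)(A)}G(v)\,d\mu_{s,r}(v)\leq \|G\|_{L^p(\mu_{s,r})}\,\bigl(\mu_{s,r}(\Phi_N(t)(A))\bigr)^{1-1/p}.
$$
Finally, Lemma~\ref{LD} applied with exponent $p\kappa$ gives
$$
\bigl\|\||D_x|^{s+\gamma/2-1/2-\varepsilon}\pi_N v\|^\kappa_{L^\infty}\bigr\|_{L^p(\mu_{s,r})}
=\bigl\|\||D_x|^{s+\gamma/2-1/2-\varepsilon}\pi_N v\|_{L^\infty}\bigr\|_{L^{p\kappa}(\mu_{s,r})}^\kappa\leq C(p\kappa)^{\kappa/2}\leq Cp^{\kappa/2},
$$
so $\|G\|_{L^p(\mu_{s,r})}\leq Cp^{\kappa/2}$. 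Setting $\beta=\kappa/2$, the assumption $\kappa<2$ from Proposition~\ref{energy} ensures $\beta<1$, yielding the claim.

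There is no real obstacle here; all three key ingredients (the variable change, the energy estimate, and the Gaussian large deviation bound) have been prepared in the preceding sections, and the one delicate cross-check is that the exponent $\kappa$ in the energy estimate is the same $\kappa$ that, after passing to $L^p$ via H\"older and Lemma~\ref{LD}, produces $p^{\kappa/2}$ with $\kappa/2<1$. Care is also needed to keep track of the fact that $\chi_r$ is defined via the conserved quantity, so the $H^{\gamma/2}$ factor in Proposition~\ref{energy} is genuinely absorbed into an $r$-dependent constant rather than surviving into the H\"older estimate.
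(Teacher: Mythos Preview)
Your proposal is correct and uses the same three ingredients as the paper (Proposition~\ref{ch-var}, Proposition~\ref{energy}, Lemma~\ref{LD}), combined via H\"older, with $\beta=\kappa/2<1$. The only difference is bookkeeping: the paper first uses the flow property $\Phi_N(t)=\Phi_N(t-\bar t)\circ\Phi_N(\bar t)$ to reduce to differentiation at $t=0$, so that after Proposition~\ref{ch-var} the weight becomes exactly $e^{-\|\pi_N u\|^2}$ and one lands directly on $d\mu_{s,r}$ over $\Phi_N(\bar t)(A)$, whereas you differentiate at general $t$ and then undo the change of variables, which requires the (routine) extension of Proposition~\ref{ch-var} from indicator functions to nonnegative test functions $G$.
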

\begin{remarque}
We remark that in a similar situation in \cite{TV}, the measure $(\mu_{s,r}(\Phi_N(t)(A)))^{1-\frac{1}{p}}$ is simply bounded by one. Thanks to the use of much more subtle energies related to the remarkable but very particular structure of the Benjamin-Ono equation,  in \cite{TV} the contribution corresponding to $Cp^\beta$ is a (delicate)
quantity tending to zero as $N\rightarrow\infty$ (which lead to the invariance of the corresponding measure while here we only get quasi-invariance).
\end{remarque}
\begin{proof}[Proof of Lemma~\ref{dve}]
Using the flow properties of $\Phi_N(t)$, we obtain that
\begin{eqnarray*}
\frac{d}{dt}\mu_{s,r}(\Phi_N(t)(A))\Big|_{t=\bar{t}}
&=&
\frac d{dt} \int_{\Phi_N(t)(A)} \chi_r(u) d\mu_{s,r}(u)
\Big |_{t=\bar t}
\\
& = & 
\frac d{dt}\int_{\Phi_N(t)(\Phi_N(\bar{t})(A))} 
\chi_r(u)
 d\mu_{s,r}(u)\Big |_{t=0}\equiv I.
 \end{eqnarray*}
Using Proposition~\ref{ch-var}, we can write $I$ as 
$$
I=\gamma_N 
\frac d{dt} \int_{\Phi_N(\bar{t})(A)}
\chi_r(u) 
e^{-\|\pi_N(\Phi_N(t)(u))\|^2_{H^{s+\gamma/2}}}\,\, du_1...du_N\, d\mu^\perp_{s;N}  
\Big |_{t=0}\,.
$$
Therefore
\begin{multline*}
I=-\gamma_N
\int_{\Phi_N(\bar{t})(A)} \chi_{r}(u)
\Big(
\frac{d}{dt}
\|\pi_N(\Phi_N(t)(u))\|^2_{H^{s+\frac{\gamma}{2}}}\Big|_{t=0}
\Big)
\\ 
e^{-\|\pi_N(u)\|^2_{H^{s+\gamma/2}}}\,\, du_1...du_N\, d\mu^\perp_{s;N}  
\end{multline*}
and consequently
$$
I=-\int_{\Phi_N(\bar{t})(A)}\Big(
\frac{d}{dt}
\|\pi_N(\Phi_N(t)(u))\|^2_{H^{s+\frac{\gamma}{2}}}\Big|_{t=0}
\Big)d\mu_{s,r}(u)\,.
$$
Therefore, using the H\"older inequality, we can write
$$
I
\leq 
\Big\|\frac{d}{dt}
\|\pi_N(\Phi_N(t)(u))\|^2_{H^{s+\frac{\gamma}{2}}}\Big|_{t=0}\Big\|_{L^p(\mu_{s,r}(u))}
(\mu_{s,r}(\Phi_N(\bar{t})(A)))^{1-\frac{1}{p}} \,.
$$
Therefore, it remains to show that 
$$
\Big\|\frac{d}{dt}
\|\pi_N(\Phi_N(t)(u))\|^2_{H^{s+\frac{\gamma}{2}}}\Big|_{t=0}\Big\|_{L^p(\mu_{s,r}(u))}
\leq 
Cp^\beta,
$$
for some $\beta<1$.
At this point, we shall invoke the energy estimate of Proposition~\ref{energy}.
Namely, using Proposition~\ref{energy}, we can write
\begin{multline}\label{krai}
\Big\|\frac{d}{dt}
\|\pi_N(\Phi_N(t)(u))\|^2_{s+\gamma/2}\Big|_{t=0}\Big\|_{L^p(\mu_{s,r}(u))}
\\
\leq 
C\Big\|
\big(1+\|\pi_N\Phi_N(t)(u)\|_{H^{\gamma/2}}^{3-\kappa}\big)
\big(1+\| |D_x|^{s+\frac{\gamma}{2}-\frac{1}{2}-\varepsilon}
\pi_N\Phi_N(t)(u)\|^\kappa_{L^{\infty}}\big)
\Big|_{t=0}\Big\|_{L^p(\mu_{s,r}(u))}
\\
\leq C(1+r^{3-\kappa})
\big(1+
\Big\|
\|
| D_x|^{s+\frac{\gamma}{2}-\frac{1}{2}-\varepsilon}
\pi_N u\|_{L^{\infty}}
\Big\|^\kappa_{L^{\kappa p}(\mu_{s,r}(u))}\Big),
\end{multline}
for some $\kappa<2$.

Using Lemma~\ref{LD} and \eqref{krai}, we obtain that
$$
\Big\|\frac{d}{dt}
\|\pi_N(\Phi_N(t)(u))\|^2_{H^{s+\frac{\gamma}{2}}}\Big|_{t=0}\Big\|_{L^p(\mu_{s,r}(u))}
\leq 
Cp^{\frac{\kappa}{2}}\,.
$$
This completes the proof of Lemma~\ref{dve}.
\end{proof}
We are now in position to apply a variant the Yudovich argument (\cite{Yu}).
\begin{lemme}\label{edno}
Fix $t\in\R$, $r\geq 0$ and $\delta>0$. 
There exists $C>0$ such that for every Borel set $A$ of $H^{s}$, every $N\geq 1$, 
$\mu_{s,r}(\Phi_N(t)(A))\leq C(\mu_{s,r}(A))^{1-\delta}$.
\end{lemme}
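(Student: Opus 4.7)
The plan is to integrate the differential inequality supplied by Lemma~\ref{dve} and then, with $\alpha\equiv\mu_{s,r}(A)$ held fixed, optimize the free parameter $p$ as a function of $\alpha$. By time-reversibility of $\Phi_N$ I may assume $t\ge 0$. Set $f(\tau)=\mu_{s,r}(\Phi_N(\tau)(A))$ for $\tau\in[0,t]$. Lemma~\ref{dve} reads $f'(\tau)\le Cp^{\beta}f(\tau)^{1-1/p}$ for every fixed $p\ge 2$, where $C$ depends only on $r$ and $\beta\in[0,1)$ is the exponent furnished by that lemma. Dividing by $f^{1-1/p}$ wherever $f>0$ and using $(f^{1/p})'=\frac{1}{p}f^{1/p-1}f'$, I obtain $(f^{1/p})'(\tau)\le Cp^{\beta-1}$; integrating on $[0,t]$ and raising to the $p$-th power yields
\[
f(t)\le\bigl(\alpha^{1/p}+C|t|\,p^{\beta-1}\bigr)^{p}.
\]

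Next I split on the size of $\alpha$. Since $\chi_r\le 1$, the measure $\mu_{s,r}$ is a sub-probability measure, and hence $f(t)\le 1$ in all cases. If $\alpha\ge e^{-1}$ then $f(t)\le 1\le e^{1-\delta}\alpha^{1-\delta}$ and the bound holds with $C=e^{1-\delta}$. For $\alpha<e^{-1}$ I choose $p=\log(1/\alpha)\ge 1$, so that $\alpha^{1/p}=e^{-1}$. Using $1+x\le e^{x}$,
\[
f(t)\le e^{-p}\bigl(1+Ce|t|\,p^{\beta-1}\bigr)^{p}\le\alpha\exp\bigl(Ce|t|\,p^{\beta}\bigr)=\alpha\exp\bigl(C'|t|\,(\log(1/\alpha))^{\beta}\bigr).
\]

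The crucial use of $\beta<1$ is that $(\log(1/\alpha))^{\beta}=o(\log(1/\alpha))$ as $\alpha\to 0^{+}$. Hence for any prescribed $\delta>0$ there exists $\alpha_{0}=\alpha_{0}(t,\delta,\beta)>0$ such that $C'|t|(\log(1/\alpha))^{\beta}\le\delta\log(1/\alpha)$ whenever $0<\alpha\le\alpha_{0}$, which gives $f(t)\le\alpha^{1-\delta}$. For $\alpha\in(\alpha_{0},e^{-1})$, the crude bound $f(t)\le 1\le\alpha_{0}^{-(1-\delta)}\alpha^{1-\delta}$ absorbs the remaining range, and taking the maximum of the three constants produces a $C=C(t,r,\delta)$ independent of $N$ and $A$, as required.

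The only subtlety is the justification of integrating the pointwise differential inequality of Lemma~\ref{dve}; however the proof of that lemma in fact produces an explicit, locally bounded expression for $\frac{d}{d\tau}\mu_{s,r}(\Phi_{N}(\tau)(A))$, so the absolute continuity of $f$ is automatic. The heart of the argument is the Yudovich-type interplay (see \cite{Yu}) between the sublinear growth $p^{\beta}$ on the right-hand side of Lemma~\ref{dve} and the $p$-th power one incurs when reversing the substitution $g=f^{1/p}$: the assumption $\beta<1$ is precisely what allows the logarithmic choice $p=\log(1/\alpha)$ to beat the loss, whereas a growth $p^{1}$ (typical of a naive entropy bound) would only have given $f(t)\le C_{t}\alpha$ with no $\delta$-gain.
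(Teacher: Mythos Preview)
Your argument is correct and is essentially the same Yudovich-type integration as in the paper: both integrate the differential inequality of Lemma~\ref{dve} to obtain $f(t)\le(\alpha^{1/p}+C|t|p^{\beta-1})^{p}$, then choose $p$ logarithmically in $\alpha=\mu_{s,r}(A)$ and use $\beta<1$. The only cosmetic difference is that the paper takes $p=2+\log(1/\alpha)$ (which automatically enforces the hypothesis $p\ge2$ of Lemma~\ref{dve}; your choice $p=\log(1/\alpha)$ would need the threshold $e^{-1}$ shifted to $e^{-2}$) and does not split explicitly on the size of $\alpha$.
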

\begin{proof}
The conclusion of Lemma~\ref{dve}, can be written as
$$
\frac{d}{dt}\big(\mu_{s,r}(\Phi_N(t)(A))\big)^{\frac{1}{p}}
\leq Cp^{-\alpha},
$$
where $\alpha=1-\beta>0$. After an integration, we obtain that
\begin{eqnarray*}
\mu_{s,r}(\Phi_N(t)(A)) & \leq & \big((\mu_{s,r}(A))^{\frac{1}{p}}+Ctp^{-\alpha}\big)^{p}
\\
& = &
\mu_{s,r}(A)e^{p\log\big(1+Ctp^{-\alpha}(\mu_{s,r}(A))^{-\frac{1}{p}}\big)}\,.
\end{eqnarray*}
Using that for $x\geq 0$, one has $\log(1+x)\leq x$, we arrive at the bound
$$
\mu_{s,r}(\Phi_N(t)(A))\leq 
\mu_{s,r}(A)e^{Ctp^{1-\alpha}(\mu_{s,r}(A))^{-\frac{1}{p}}}\,.
$$
We now choose $p$ as
$$
p\equiv2+\log\Big(\frac{1}{\mu_{s,r}(A)}\Big). 
$$
Therefore we obtain that 
\begin{equation}\label{uns}
\mu_{s,r}(\Phi_N(t)(A))\leq \mu_{s,r}(A)e^{Cet\big(2+\log\big(\frac{1}{\mu_{s,r}(A)}\big)\big)^{1-\alpha}}\,.
\end{equation}
We therefore conclude that for every $\delta>0$ there is a constant $\tilde{C}=\tilde{C}(\delta,\alpha,C,t)$ (i.e. depending also on $\alpha,C$ and $t$) such that
$$
\mu_{s,r}(\Phi_N(t)(A))\leq \tilde{C}(\delta,\alpha,C,t)(\mu_{s,r}(A))^{1-\delta}\,.
$$
This completes the proof of Lemma~\ref{edno}.
\end{proof}
\begin{remarque}\label{growth}
Using the argument of \cite{B} the bound \eqref{uns} may be used to obtain that the $H^{s+\frac{\gamma}{2}-\frac{1}{2}-\varepsilon}$ norms of the solutions with data on the support of $\mu_s$ do not grow faster than a quantity of type $t^{\gamma(s)}$ ($t\gg 1$) with $\gamma(s)\rightarrow \infty$ as $s\rightarrow\infty$ ($\gamma(s)$ may be taken close to $s-\frac{1}{2}$). However, such a bound may be achieved by purely deterministic methods. On the other hand, if the bound \eqref{uns} is replaced by the stronger bound 
$$ 
\mu_{s,r}(\Phi_N(t)(A))\leq \mu_{s,r}(A)e^{Ct}
$$
then the argument of \cite{B} would give a bound of type $t^{\frac{1}{2}}$. Such a bound would be of greater interest because the power is independent of $s$.
\end{remarque}
\section{End of the proof of the main result}
\begin{lemme}\label{lyon}
Fix $t\in\R$, $r>0$, $R>0$ and $\delta>0$. There exists $C>0$ such that for every Borel set $A\subset B_{R,s}$ of $H^{s}$,  
$\mu_{s,r}(\Phi(t)(A))\leq C(\mu_{s,r}(A))^{1-\delta}$.
\end{lemme}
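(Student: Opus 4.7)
The plan is to deduce Lemma~\ref{lyon} from Lemma~\ref{edno} by combining the approximation property in Proposition~\ref{approximation} (which lets us pass from $\Phi_N(t)$ to $\Phi(t)$) with standard regularity properties of the finite Borel measure $\mu_{s,r}$ on the Polish space $H^s$. I would proceed in two steps: first establish the bound when $A$ is compact, and then extend it to general Borel $A\subset B_{R,s}$ by inner regularity.

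For the compact case, fix a compact $A\subset B_{R,s}$ and let $\varepsilon>0$. By Proposition~\ref{approximation} there exists $N_0$ such that for every $N\geq N_0$
\[
\Phi(t)(A)\subset \Phi_N(t)(A+B_{\varepsilon,s}),
\]
and $A+B_{\varepsilon,s}$ is a Borel set. Since the constant $C$ in Lemma~\ref{edno} is independent of $N$ and of the Borel set involved, applying that lemma yields
\[
\mu_{s,r}(\Phi(t)(A))\leq \mu_{s,r}\bigl(\Phi_N(t)(A+B_{\varepsilon,s})\bigr)\leq C\bigl(\mu_{s,r}(A+B_{\varepsilon,s})\bigr)^{1-\delta}.
\]
Because $A$ is closed we have $\bigcap_{\varepsilon>0}(A+B_{\varepsilon,s})=A$, and continuity from above of the finite measure $\mu_{s,r}$ gives $\mu_{s,r}(A+B_{\varepsilon,s})\to\mu_{s,r}(A)$ as $\varepsilon\to 0$. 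Using the continuity of $x\mapsto x^{1-\delta}$ and sending $\varepsilon\to 0$, I conclude $\mu_{s,r}(\Phi(t)(A))\leq C(\mu_{s,r}(A))^{1-\delta}$.

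To extend to a general Borel set $A\subset B_{R,s}$, I invoke inner regularity of $\mu_{s,r}$ on the Polish space $H^s$:
\[
\mu_{s,r}(\Phi(t)(A))=\sup\{\mu_{s,r}(K):K\subset\Phi(t)(A),\ K\text{ compact}\}.
\]
By Proposition~\ref{l1} the map $\Phi(-t)$ is continuous on $H^s$, so for any such $K$ the set $K':=\Phi(-t)(K)$ is a compact subset of $A$ (in particular of $B_{R,s}$) with $\Phi(t)(K')=K$. The compact case applied to $K'$, together with monotonicity, yields
\[
\mu_{s,r}(K)=\mu_{s,r}(\Phi(t)(K'))\leq C(\mu_{s,r}(K'))^{1-\delta}\leq C(\mu_{s,r}(A))^{1-\delta},
\]
and taking the supremum over $K$ completes the proof. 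The only real obstacle is the uniform-in-$N$ transfer from $\Phi_N$ to $\Phi$, which is already packaged in Proposition~\ref{approximation}; the two-step compactness argument is forced on us because compact exhaustions of $\Phi(t)(A)$ do not automatically come from compact exhaustions of $A$, and we rely on the continuity of the inverse flow $\Phi(-t)$ to pull them back inside $A$.
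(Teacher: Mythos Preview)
Your proof is correct and follows essentially the same two-step strategy as the paper: first establish the bound for compact $A\subset B_{R,s}$ via Proposition~\ref{approximation} and Lemma~\ref{edno}, then extend to general Borel $A$ by inner regularity of $\mu_{s,r}$, pulling back compacts $K\subset\Phi(t)(A)$ through the continuous map $\Phi(-t)$. The only cosmetic differences are that you phrase the $\varepsilon\to 0$ limit via continuity from above (the paper invokes dominated convergence) and you take a supremum over compacts rather than a sequence; both arguments are equivalent.
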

\begin{proof}
We first show the statement of Lemma~\ref{lyon} if $A\subset B_{R,s}$ is a compact set.  In this case, using  Proposition~\ref{approximation} and Proposition~\ref{edno}, we obtain that for every $\varepsilon>0$
there is $N_0$ such that for every $N\geq N_0$,
$$
\mu_{s,r}(\Phi(t)(A))\leq \mu_{s,r}(\Phi_N(t)(A+B_{\varepsilon,s}))\leq C(\delta, t,r)\big( \mu_{s,r}(A+B_{\varepsilon,s})\big)^{1-\delta}\,.
$$
Now, since $A$ is a compact, using the dominate convergence theorem, we obtain that in the limit $\varepsilon\rightarrow 0$,
$$
\mu_{s,r}(\Phi(t)(A))\leq  C(\delta, t,r)\big( \mu_{s,r}(A)\big)^{1-\delta}\,.
$$
Let now $A\subset B_{R,s}$ be an arbitrary Borel set. Using the regularity of $\mu_{s,r}$, we obtain that there is a sequence $(K_n)_{n=1}^{\infty}$ of compacts of $H^s$ such that $K_n\subset \Phi(t)(A)$ and 
\begin{equation}\label{f0}
\lim_{n\rightarrow\infty}\mu_{s,r}(K_n)=\mu_{s,r}(\Phi(t)(A))\,.
\end{equation}
We next observe that
\begin{equation}\label{f1}
K_{n}\subset \Phi(t)(\Phi(-t)(K_n))\,.
\end{equation}
Indeed, every $x\in K_n$ can be written as 
$$
x=\Phi(t)(\Phi(-t)(x))\in \Phi(t)(\Phi(-t)(K_n))\,.
$$ 
Thus we have \eqref{f1}. 
Using \eqref{f1}, we infer that
\begin{equation}\label{f2}
\mu_{s,r}(K_n)\leq \mu_{s,r}(\Phi(t)(F_n)),\quad F_n=\Phi(-t)(K_n)\,.
\end{equation}
Since $K_n$ is a compact and $\Phi(-t)$ a continuous map, we obtain that $F_n$ is a compact. We now claim that
\begin{equation}\label{f3}
F_n\subset A\,.
\end{equation}
Indeed, let $x\in F_n$. This means that there is $y\in K_n$ such that $x=\Phi(-t)(y)$. But $K_n\subset \Phi(t)(A)$ and therefore $y\in \Phi(t)(A)$.
As a consequence, there exists $z\in A$ such that $y=\Phi(t)(z)$. Since $x=\Phi(-t)(y)$ and  $y=\Phi(t)(z)$, we infer that $x=z$. Therefore $x\in A$ and the proof of 
\eqref{f3} is complete. Thanks to the analysis for compact sets performed in the beginning of the proof, we obtain that
$$
\mu_{s,r}(\Phi(t)(F_n))\leq  C(\delta, t,r)\big( \mu_{s,r}(F_n)\big)^{1-\delta}
\leq C(\delta, t,r)\big( \mu_{s,r}(A)\big)^{1-\delta}
\,.
$$
Coming back to \eqref{f2}, we obtain the bound
$$
\mu_{s,r}(K_n)\leq C(\delta, t,r)\big( \mu_{s,r}(A)\big)^{1-\delta}\,.
$$
Passing to the limit $n\rightarrow \infty$ by invoking \eqref{f0} gives
$$
\mu_{s,r}(\Phi(t)(A))\leq C(\delta, t,r)\big( \mu_{s,r}(A)\big)^{1-\delta}\,.
$$
This completes the proof of Lemma~\ref{lyon}.
\end{proof}
Let us now complete the proof of Theorem~\ref{main}.
Let $A$ be a Borel set of $H^s$ such that $\mu_s(A)=0$. Our goal is to show that $\mu_{s}(\Phi(t)(A))=0$.
Since $\mu_s(A)=0$, we also have that for every $R, r>0$, $\mu_{s,r}(A\cap B_{R,s})=0$.
Therefore, thanks to  Lemma~\ref{lyon} for every $R,r>0$, $\mu_{s,r}(\Phi(t)(A\cap B_{R,s}))=0$.
On the other hand, thanks to the dominated convergence theorem, for every Borel set $A$ of $H^s$,
$$
\mu_s(A)=\lim_{r\rightarrow\infty}\mu_{s,r}(A).
$$
This implies that for every $R>0$, $\mu_{s}(\Phi(t)(A\cap B_{R,s}))=0$.
Now, we invoke the straightforward property
$$
\Phi(t)(A)=\bigcup_{R=1}^\infty \Phi(t)(A\cap B_{R,s}))
$$
to obtain that $\mu_{s}(\Phi(t)(A))=0$. This completes the proof of Theorem~\ref{main}. 
\section{Proof of Proposition~\ref{CM}}
We have that 
$
\Sigma(t)(u)=S(t)(u)+f(t),
$
where $f(t)$ is given by
$$
f(t)\equiv -\int_{0}^{t}S(t-\tau)\big((1+|D_x|^{\gamma})^{-1}\partial_x(h)\big)d\tau.
$$
We have that $f(t)\in H^\sigma$, thanks to the hypothesis on $h$.
Since $\mu_s$ is invariant under $S(t)$, we have that for every $g\in L^1(d\mu_s(u))$,
$$
\int_{H^s}g(S(t)(u)+f(t))d\mu_s(u)=\int_{H^s}g(u+f(t))d\mu_s(u)
$$
and therefore, we need to show that the image measure of $\mu_s$ under the map $u\mapsto u+f(t)$ (seen as a bijection on $H^\sigma$) 
is singular with respect to $\mu_s$.
We now show that 
\begin{equation}\label{diverge}
f(t)\notin H^{s+\frac{\gamma}{2}},\quad t\neq 0.
\end{equation}
We have 
$$
S(t-\tau)\big((1+|D_x|^{\gamma})^{-1}\partial_x(h)\big)=
\sum_{n}
\frac{in}{1+|n|^{\gamma}}\hat{h}(n)e^{inx}e^{-i(t-\tau)\frac{n}{1+|n|^\gamma}}
$$
and using that for $t\neq 0$ there is $c>0$ such that for every $n$
$$
\Big|
\int_{0}^t
e^{-i(t-\tau)\frac{n}{1+|n|^\gamma}}\,d\tau
\Big|\geq c,
$$
we obtain that for $\gamma<3/2$,
$$
\|f(t)\|_{H^{s+\frac{\gamma}{2}}}\geq c \|h\|_{H^{s-\frac{\gamma}{2}+1}}\geq c \|h\|_{H^{s+\frac{\gamma}{2}-\frac{1}{2}}}=+\infty.
$$
This completes the proof of \eqref{diverge}.
Therefore we can apply the Cameron-Martin argument as we now explain.
Thanks to \eqref{diverge}, there is $k\in H^{s+\frac{\gamma}{2}}$ such that
\begin{equation}\label{k0}
\sum_{n\neq 0}|n|^{2(s+\frac{\gamma}{2})}\widehat{f(t)}(n)\overline{\hat{k}(n)}=+\infty\,.
\end{equation}
The existence of $k$ may be obtained either by invoking the Banach-Steinhaus theorem or by an explicit construction. 
Next, we observe that
\begin{equation}\label{k1}
\mu_s\big(u\,:\, \sum_{n\neq 0}|n|^{2(s+\frac{\gamma}{2})}\hat{u}(n)\overline{\hat{k}(n)}<\infty\big)=
p\big(\omega\,:\, \sum_{n\neq 0}|n|^{s+\frac{\gamma}{2}}g_{n}(\omega)\overline{\hat{k}(n)}<\infty\big)=1,
\end{equation}
where we used that the basic orthogonality between $(g_n)$ yields 
$$
\Big\|
\sum_{n\neq 0}|n|^{s+\frac{\gamma}{2}}g_{n}(\omega)\overline{\hat{k}(n)}
\Big\|_{L^2_{\omega}}
\lesssim
\|k\|_{H^{s+\frac{\gamma}{2}}}\,.
$$
Thanks to \eqref{k1} there is a set $A\subset H^s$ such that $\mu_s(A)=1$ and for every $v\in A$,
\begin{equation}\label{k3}
\sum_{n\neq 0} |n|^{2(s+\frac{\gamma}{2})}\hat{v}(n)\overline{\hat{k}(n)}<\infty\,.
\end{equation}
Let us denote by $\mu_s^t$ the image measure of $\mu_s$ under the map $u\mapsto u+f(t)$.
Then
$$
\mu_s^t(A)=\mu_s(B),\quad
B\equiv\{v-f(t), v\in A\}.
$$
Thanks to \eqref{k0} and \eqref{k3}, we obtain that for every $u\in B$,
$$
\sum_{n\neq 0} |n|^{2(s+\frac{\gamma}{2})}\hat{u}(n)\overline{\hat{k}(n)}=\infty\,.
$$
Therefore $B\subset A^c$ and consequently $\mu_s(B)=0$, i.e. $\mu_s^t(A)=0$. Since $\mu_s(A)=1$, we conclude that $\mu_s$ and $\mu_s^t$ are mutually singular. 
This completes the proof of Proposition~\ref{CM}\,.
\section{Final remarks}
The arguments we presented here can be seen as a combination of the use of higher order pseudo-energies and the idea of \cite{TV} reducing the analysis of the transported measure to a property of the random series describing the set of the initial data. In this work we presented this approach in the simplest significant setting 
we found, namely the generalized BBM models. 
It would be interested to decide how much the results obtained here can be extended to other Hamiltonian PDE. 
For instance, we believe that a slight modification of the proof of Theorem~\ref{main} gives the quasi-invariance of the gaussian measures $\mu_s$ under the flow of the 1d Klein-Gordon equation
\begin{equation}\label{KG}
\partial_t^2u-\partial_x^2u+u+u^3=0\,.
\end{equation}
Such a result would however be at the border line of the Ramer result (there is $1$ smoothing when rewriting \eqref{KG} as a first order order equation)  and moreover it does not go beyond the Cameron-Martin threshold. Consequently, we find it less interesting than Theorem~\ref{main}.
The extension to the $2d$ in the context of \eqref{KG} is an interesting issue which is out of our present understanding of this set of problems.
Another issue which may be interesting is whether one may incorporate a dispersive effect in the measure quasi-invariance problems, i.e. whether one may exploit more subtle smoothing properties related to dispersion (see e.g. \cite{Bo,BIT,ET}). Finally, it would be very interesting to find situations where we can prove that the transported measure is singular with respect to the initial gaussian measure and describe the measure evolution. 
\section{Acknowledgements.} 
I am indebted to Nicolas~Burq and Nicola~Visciglia since this work benefited from our collaborations on related topics. 
I am grateful to Ana Bela~Cruzeiro, Huang~Guan and Tadahiro~Oh for  discussions on the subject discussed in this paper.  

\end{document}